\documentclass[10pt]{amsart}
\usepackage[a4paper,marginratio=1:1]{geometry}
\usepackage[initials,non-sorted-cites]{amsrefs}
\usepackage{tensor,braket,bm}

\newtheorem{thm}{Theorem}[section]
\newtheorem*{thmodd}{Theorem 0.1$'$}
\newtheorem{prop}[thm]{Proposition}
\newtheorem{lem}[thm]{Lemma}

\theoremstyle{definition}

\theoremstyle{remark}
\newtheorem{rem}[thm]{Remark}

\newcommand{\abs}[1]{\lvert#1\rvert}
\newcommand{\bdry}{\partial}

\DeclareMathOperator{\Ric}{Ric}
\DeclareMathOperator{\Scal}{Scal}
\DeclareMathOperator{\tr}{tr}

\DeclareMathOperator{\grad}{grad}
\DeclareMathOperator{\im}{im}

\numberwithin{equation}{section}

\title{$Q$-curvature of Weyl structures and Poincar\'e metrics}
\author{Kengo Hirachi}
\address{Graduate School of Mathematical Sciences, The University of Tokyo,
	3-8-1 Komaba, Meguro, Tokyo 153-8914, Japan}
\email{hirachi@ms.u-tokyo.ac.jp}
\author{Christian L\"ubbe}
\address{Department of Economics, Mathematics and Statistics, Birkbeck, University of London,
	Malet Place, London WC1E 7HX, United Kingdom}
\email{c.luebbe@bbk.ac.uk}
\author{Yoshihiko Matsumoto}
\address{Department of Mathematics, Tokyo Institute of Technology,
	2-12-1 Ookayama, Meguro, Tokyo 152-8551, Japan}
\email{matsumoto@math.titech.ac.jp}
\subjclass[2010]{Primary 53A30; Secondary 53A55.}

\begin{document}

\begin{abstract}
	We study an asymptotic Dirichlet problem for Weyl structures on asymptotically hyperbolic manifolds.
	By the bulk-boundary correspondence, or more precisely by the Fefferman--Graham theorem on Poincar\'e metrics,
	this leads to a natural extension of the notion of Branson's $Q$-curvature to Weyl structures
	on even-dimensional conformal manifolds.
\end{abstract}

\maketitle

\section*{Introduction}

Let $\overline{X}=X\sqcup\bdry X$ be a smooth compact manifold-with-boundary of dimension $n+1$,
and $g$ a smooth conformally compact metric on $X$, i.e., a Riemannian metric for which
$r^2g$ extends to a smooth metric $\overline{g}$ on $\overline{X}$, where
$r\in C^\infty(\overline{X})$ is any boundary defining function.
The metric $g$ is called \emph{asymptotically hyperbolic} (abbreviated as AH) if it moreover satisfies
$\abs{dr}_{\overline{g}}=1$ on $\bdry X$.
Such a pair $(X,g)$ is a generalization of the ball model of the hyperbolic space $\mathbb{H}^{n+1}$.
The \emph{conformal infinity} of $(X,g)$ is the boundary $M=\bdry X$ equipped with the conformal class
$\mathcal{C}$ determined by $\overline{g}|_{TM}$, which is independent of $r$.

In this article, we introduce the notion of the $Q$-curvature of Weyl structures on $(M,\mathcal{C})$
through studying a Dirichlet-type problem for Weyl structures on $(\overline{X},\overline{\mathcal{C}})$,
where $\overline{\mathcal{C}}$ is the conformal class of $\overline{g}$.
Our work is a generalization of Fefferman--Graham's characterization~\cite{Fefferman_Graham_02}
of Branson's $Q$-curvature~\cite{Branson_95}.

By definition, a \emph{Weyl structure} (or a \emph{Weyl connection}) $\nabla$ on $(M,\mathcal{C})$ is
a torsion-free linear connection on $M$ that preserves the class $\mathcal{C}$.
If we pick any representative metric $h\in\mathcal{C}$ as a ``reference metric'' and let $\nabla^h$ be
the associated Levi-Civita connection,
then a torsion-free linear connection $\nabla$ is a Weyl structure if and only if it satisfies
$\nabla=\nabla^h+\beta$ for some (unique) 1-form $\beta\in\Omega^1(M)$, meaning
$\nabla h=-2\beta\otimes h$, or equivalently
\begin{equation*}
	\nabla_\xi\eta=\nabla^h_\xi\eta+\beta(\xi)\eta+\beta(\eta)\xi-h(\xi,\eta)\beta^\sharp,
\end{equation*}
where $\beta^\sharp$ is the metric dual of $\beta$.
If $h'=e^{2\Upsilon}h\in\mathcal{C}$ is another representative, where $\Upsilon\in C^\infty(M)$, then
the 1-form $\beta'$ satisfying $\nabla=\nabla^{h'}+\beta'$ is given by $\beta'=\beta-d\Upsilon$.
Therefore, a Weyl structure $\nabla=\nabla^h+\beta$ is a Levi-Civita connection if and only if $\beta$
is exact, and is locally a Levi-Civita connection if and only if $\beta$ is closed.
In the latter case, we also say that $\nabla$ itself is \emph{closed}.

Suppose $(X,g)$ is given, and let $\overline{\nabla}$ be a Weyl structure
on $(\overline{X},\overline{\mathcal{C}})$.
As $\overline{\nabla}$ may not be a Levi-Civita connection, its curvature tensor does not necessarily satisfy the
usual Riemannian symmetry properties.
In particular, the Ricci tensor is not symmetric in general.
We call the skew-symmetric part of $\Ric_{\overline{\nabla}}$ the \emph{Faraday tensor} $F_{\overline{\nabla}}$.
It is known that, if $\overline{g}\in\overline{\mathcal{C}}$ is any representative and
$\overline{\nabla}=\nabla^{\overline{g}}+\overline{b}$, then $F_{\overline{\nabla}}$
equals a constant times $d\overline{b}$ (the constant being dependent on convention).
Consequently, the Faraday tensor $F_{\overline{\nabla}}$ determines $\overline{\nabla}$
up to addition of a closed 1-form.

We consider the following curvature constraint, which is the Euler--Lagrange equation
for the Lagrangian density $\abs{F_{\overline{\nabla}}}_g^2$:
\begin{equation}
	\label{eq:divergence_free_Faraday}
	d_g^*F_{\overline{\nabla}}=0.
\end{equation}
We have a canonical reference metric for $\overline{\nabla}$ on $X$, which is the metric $g$.
By putting $\overline{\nabla}=\nabla^g+b$, we can reformulate \eqref{eq:divergence_free_Faraday} into
an equation for a 1-form $b\in\Omega^1(X)$, which is known as the (massless) Proca equation:
\begin{equation}
	\label{eq:Proca}
	d_g^*db=0.
\end{equation}
Since $F_{\overline{\nabla}}$ is invariant under the change
$\overline{\nabla}\rightsquigarrow\overline{\nabla}+\gamma$ for any closed 1-form $\gamma\in\Omega^1(\overline{X})$,
so is equation \eqref{eq:Proca}.
To break this gauge invariance as much as possible, we introduce the Feynman gauge condition:
\begin{equation}
	\label{eq:Feynman}
	d_g^*b=0.
\end{equation}
Then clearly, the solutions of the system of equations \eqref{eq:Proca} and \eqref{eq:Feynman} have only
the freedom of adding harmonic 1-forms.

The natural Dirichlet data for Weyl structures $\overline{\nabla}$ on $(\overline{X},\overline{\mathcal{C}})$
are given by those on $(M,\mathcal{C})$;
note that the notion of the induced Weyl structure on $M$ by $\overline{\nabla}$ makes sense because
$\overline{\mathcal{C}}$ determines the orthogonal decomposition $(T\overline{X})|_M=TM\oplus T^\perp M$.
The Dirichlet problem for our system of equations can be solved as follows.

\begin{thm}
	\label{thm:existence_extension}
	Let $n$ be even and $n\ge 4$.
	Suppose that $g$ is an AH smooth conformally compact metric on $X$,
	and let $\nabla$ be a smooth Weyl structure on the conformal infinity $(M,\mathcal{C})$,
	where $M=\bdry X$.
	Then there exists a $C^{n-3}$ Weyl structure $\overline{\nabla}$ on $\overline{X}$
	with induced Weyl structure $\nabla$ on $M$ satisfying \eqref{eq:Proca} and \eqref{eq:Feynman}.
	It is unique up to addition of an $L^2$-harmonic 1-form on $X$.
\end{thm}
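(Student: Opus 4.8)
The plan is to solve the problem first by a Fefferman--Graham-type formal expansion near $M$, and then to upgrade the approximate solution to an exact one by Fredholm theory for the Hodge Laplacian on $(X,g)$. First I would fix a representative $\overline g\in\overline{\mathcal C}$ and the associated geodesic defining function $r$, so that $g=r^{-2}(dr^2+h_r)$ near $M$ with $h_0=\overline g|_{TM}=:h\in\mathcal C$ and $\nabla=\nabla^h+\beta$ for a unique $\beta\in\Omega^1(M)$. Writing $\overline\nabla=\nabla^g+b$ on $X$, the requirements that $\overline\nabla$ be a $C^{n-3}$ Weyl structure on $\overline X$ and induce $\nabla$ on $M$ pin down the leading asymptotics of $b$ at $\bdry X$ in terms of $\beta$ and the data $(r,h_r)$. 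After the gauge fixing, \eqref{eq:Proca}--\eqref{eq:Feynman} is a determined elliptic system; equivalently $\Delta_g b=0$ (with $\Delta_g=d\,d_g^*+d_g^*d$ the Hodge Laplacian) together with the constraint $d_g^*b=0$, both of which I would build into the formal expansion.

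For the formal step, the normal operator of $\Delta_g$ on $1$-forms at $\bdry X$ is that of $\mathbb H^{n+1}$, and a direct computation shows that on the tangential component its indicial roots are $0$ and $n-2$ --- the shift by $2$ from the scalar roots $0$ and $n$ reflecting that we are dealing with the Maxwell/Proca field and not a scalar. Solving $\Delta_g b=0$ recursively, the Taylor coefficient at order $r^m$ is uniquely determined as long as $m$ is not an upper indicial root; at $m=n-2$ the indicial operator degenerates and one is forced to insert a term $r^{n-2}\log r$ whose coefficient is a natural tensor built from $\beta$, $h$ and finitely many of their derivatives --- this coefficient is a nonzero multiple of the $Q$-curvature of the Weyl structure. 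Since $n$ is even and $\ge4$ the most singular obstruction is this $r^{n-2}\log r$ term, and $r^{n-2}\log r\in C^{n-3}\setminus C^{n-2}$, so the formal solution, and hence $\overline\nabla$, is $C^{n-3}$ but in general no smoother; the constraint $d_g^*b=0$ is solved simultaneously in the expansion. (For $n=2$ the two tangential roots collide and the obstruction already appears at order $\log r$, which is why $n\ge4$ is needed.)

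Next, truncating the formal expansion at a high order and extending it arbitrarily into $X$ produces $b_{\mathrm{approx}}$ with the prescribed leading behavior and $\Delta_g b_{\mathrm{approx}}=e$, plus a residual gauge error, both $O(r^\infty)$. I would then solve $\Delta_g c=-e$ for a decaying correction $c$ using the Fredholm theory of the Hodge Laplacian on asymptotically hyperbolic manifolds of Mazzeo and Graham--Lee: on suitable weighted Sobolev spaces $r^\delta H^k$, $\Delta_g$ is Fredholm of index zero, its kernel and cokernel being the finite-dimensional space $\mathcal H^1_{(2)}(X)$ of $L^2$-harmonic $1$-forms. Since $e=O(r^\infty)$, solvability reduces to $\langle e,\eta\rangle=0$ for $\eta\in\mathcal H^1_{(2)}(X)$; writing $e=d_g^*\,db_{\mathrm{approx}}+d\,d_g^*b_{\mathrm{approx}}$ and integrating by parts on $\{r>\varepsilon\}$, the boundary terms involve $d_g^*b_{\mathrm{approx}}=O(\varepsilon^\infty)$ and $\iota_N\,db_{\mathrm{approx}}$, with $N$ the unit normal to $\{r=\varepsilon\}$ --- and here the key point is that the field strength $db_{\mathrm{approx}}$ is smooth up to $\overline X$ and decays in the $g$-metric, because the singular leading part of $b_{\mathrm{approx}}$ is exact --- paired against $\eta=O(\varepsilon^{n-2})$ and the area element of weight $\varepsilon^{-n}$, so all boundary terms vanish in the limit and $\langle e,\eta\rangle=0$. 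Hence $c$ exists, and elliptic regularity with an $O(r^\infty)$ right-hand side makes $c$ polyhomogeneous with leading term $O(r^{n-2})$, so $c\in C^{n-3}(\overline X)$ as well, and $b:=b_{\mathrm{approx}}+c$ gives the desired $C^{n-3}$ Weyl structure on $\overline X$ solving \eqref{eq:Proca}--\eqref{eq:Feynman}.

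For uniqueness, the difference $c$ of two solutions with the same induced Weyl structure satisfies $\Delta_g c=0$, the gauge constraint, and extends to $\overline X$ with trivial leading data along $M$; such a harmonic $1$-form is $O(r^{n-2})$, hence lies in $L^2$ for $n\ge4$, i.e.\ $c\in\mathcal H^1_{(2)}(X)$, while conversely any element of $\mathcal H^1_{(2)}(X)$ may be added to $b$ without changing the leading data or the equations. The step I expect to be the main obstacle is this correction step: one must set up the weighted spaces so that the gauge-fixed system is exactly Fredholm with cokernel $\mathcal H^1_{(2)}(X)$, verify the orthogonality of the error by exploiting the decay of the field strength, and track polyhomogeneity closely enough to see that the correction preserves the $C^{n-3}$ regularity of the solution.
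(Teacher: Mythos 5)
Your strategy---formal expansion at the tangential indicial roots $0$ and $n-2$, truncation, and a Fredholm correction whose obstruction space is $\mathcal{H}^1_{(2)}(X)$---is a legitimate route, but note that it amounts to re-proving the input the paper simply quotes: Proposition~\ref{prop:harmonic_extension_of_1-forms} (Mazzeo, Aubry--Guillarmou) already packages the solvability of the Dirichlet problem for $\Delta_g$ on $1$-forms, including the $x^{n-2}\log x$ term, the $C^{n-3}$ regularity, and uniqueness modulo $L^2$-harmonic forms. The paper's actual proof is a reduction to that proposition: instead of working with the singular potential $b$ relative to $g$, it changes the reference metric to $\overline{g}=\rho^2g$, where $\log\rho$ is the harmonic defining function of Proposition~\ref{prop:harmonic_defining_function}; then the potential $\overline{b}=b-d\log\rho$ is regular up to $\partial X$, the inhomogeneous term you have to carry along ($-d\Delta_g\log r$, coming from the exact singular part) disappears because $\Delta_g\log\rho=0$ exactly, and the whole problem becomes the homogeneous Dirichlet problem $\Delta_g\overline{b}=0$, $\overline{b}|_{TM}=\beta$. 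That device is what your version trades for the extra analytic work.

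The genuine gap is the gauge condition. Your exact solution $b=b_{\mathrm{approx}}+c$ satisfies $\Delta_gb=0$, but you only arrange $d_g^*b_{\mathrm{approx}}=O(r^\infty)$, and the correction $c$ is produced by solving $\Delta_gc=-e$, which gives no control on $d_g^*c$. So \eqref{eq:Feynman} is not established for $b$; and since $\Delta_gb=0$ only yields $dd_g^*b+d_g^*db=0$, neither is \eqref{eq:Proca}. The missing step---which is exactly how the paper closes this loop---is to observe that $u:=d_g^*b$ satisfies $\Delta_gu=d_g^*\Delta_gb=0$ as a function on $X$, that the asymptotics you have already established show $u$ extends continuously to $\overline{X}$ with $u|_{\partial X}=0$, and that the maximum principle then forces $u\equiv0$, whence $d_g^*db=\Delta_gb-dd_g^*b=0$ as well. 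Without this the theorem as stated is not proved; with it, your argument goes through. Two smaller inaccuracies, harmless for Theorem~\ref{thm:existence_extension} itself: for a general AH metric $g$ the $r^{n-2}\log r$ coefficient depends on the jet of $g$ at $\partial X$ and not only on $(h,\beta)$; and that coefficient is the Branson--Gover term $L_1\beta$ rather than the full $Q$-curvature of the Weyl structure, which requires the Poincar\'e metric and also involves the normal logarithmic coefficient at order $r^{n-1}\log r$.
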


It is known that any $L^2$-harmonic 1-form $\gamma\in\Omega^1(X)$ is smoothly extended to $\overline{X}$,
which is a consequence of the fact that $\gamma$ admits a ``polyhomogeneous expansion'' and
its logarithmic term coefficients all vanish since $\gamma|_{TM}=0$
(see~Proposition \ref{prop:harmonic_extension_of_1-forms} and \cite{Aubry_Guillarmou_11}*{Section 3.1.1}).
Therefore, adding $L^2$-harmonic 1-forms does not break the $C^{n-3}$ boundary regularity
of $\overline{\nabla}$.

We made an assumption on $n$ in the theorem above because this is the case of our main interest.
However, the following theorem for $n\ge 3$ odd can be proved almost by the same argument.
Again, $L^2$-harmonic 1-forms are smooth up to the boundary.

\begin{thmodd}
	Let $n$ be odd and $n\ge 3$, and $(X,g)$, $\nabla$ as in Theorem \ref{thm:existence_extension}.
	Then there exists a smooth Weyl structure $\overline{\nabla}$ on $\overline{X}$ with
	induced Weyl structure $\nabla$ on $M$ satisfying \eqref{eq:Proca} and \eqref{eq:Feynman}.
	It is unique up to addition of an $L^2$-harmonic 1-form on $X$.
\end{thmodd}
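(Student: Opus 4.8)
The plan is to run the proof of Theorem~\ref{thm:existence_extension} with $n$ odd: a formal solution near $M$, then a global correction via the linear analysis on asymptotically hyperbolic manifolds. The only essential change is that for odd $n$ the parity now \emph{prevents} the appearance of the logarithmic terms that, in the even case, cut the boundary regularity down to $C^{n-3}$. Concretely, after fixing a boundary defining function putting $g$ into Graham--Lee normal form, the system \eqref{eq:Proca}--\eqref{eq:Feynman} with induced data $\nabla$ on $M$ becomes a Dirichlet problem at infinity: produce a $1$-form on $X$ that, in the canonical gauge of $g$, is co-closed and satisfies $d_g^*db=0$, with prescribed leading asymptotics at $M$ recording the $1$-form $\beta$ of $\nabla$ relative to a representative of $\mathcal C$, and describe the residual freedom. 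Since the Feynman gauge makes $\Delta_g b=0$ on $1$-forms equivalent to \eqref{eq:Proca}, this is at heart a boundary problem for the Hodge Laplacian on $1$-forms.

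\emph{Formal solution, where the parity enters.} Just as in \cite{Fefferman_Graham_02} for the scalar $Q$-curvature and in \cite{Aubry_Guillarmou_11} for the Dirichlet problem on forms, I would build the solution to infinite order at $M$ by the recursion dictated by the indicial operators of $\Delta_g$, together with $d_g^*$, acting on $1$-forms. Splitting a $1$-form into its normal and tangential parts, the indicial roots occur in pairs $\{a,n-a\}$, with $a=0$ for the normal and $a=1$ for the tangential component; the Dirichlet datum $\beta$ is carried by the smaller tangential root, the residual freedom — constrained by the Feynman gauge — lives at the larger roots and accounts for the stated ambiguity, and the recursion proceeds unobstructed except at the critical order, where a compatibility condition on the accumulated right-hand side must hold. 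For $n$ even this condition is governed by a conformally natural differential operator of order $n-2$ applied to $\beta$ — the critical Branson--Gover, or ``$Q$-curvature'', operator of the Weyl structure — together with curvature terms, and its generic failure forces a logarithmic term, which is the source of the $C^{n-3}$ regularity in Theorem~\ref{thm:existence_extension}. For $n$ \emph{odd} there is no conformally natural operator of the required odd order playing this role, exactly as the scalar GJMS$/Q$-curvature obstruction of Fefferman--Graham is absent in odd dimensions; the obstruction therefore vanishes identically, the recursion closes, and one obtains a \emph{smooth} formal solution, unique modulo $O(r^\infty)$ after normalizing the free data at the larger roots. I expect the verification of this vanishing to be the only genuine work beyond the even case.

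\emph{Globalization, regularity, and uniqueness.} Let $\tilde b$ be a $1$-form on $X$ agreeing with the formal solution to infinite order, so that its deviation from the correct leading behaviour extends smoothly to $\overline X$ and induces $\nabla$ on $M$, while $\Delta_g\tilde b$ and $d_g^*\tilde b$ lie in $r^\infty C^\infty$. By the Fredholm theory of the Hodge Laplacian on asymptotically hyperbolic manifolds (Mazzeo; Mazzeo--Phillips; \cite{Aubry_Guillarmou_11} for forms), $\Delta_g$ on $1$-forms is invertible modulo the finite-dimensional space of $L^2$-harmonic $1$-forms, and $\Delta_g\tilde b$ pairs trivially with that space because the relevant boundary integrals in Green's formula vanish: $d_g^*\tilde b=O(r^\infty)$, $d\tilde b$ is a smooth $2$-form, and $L^2$-harmonic $1$-forms are closed, co-closed and decay like $O(r^{\,n-1})$. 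Hence there is a decaying correction $c$ with $\Delta_g(\tilde b+c)=0$; then $d_g^*(\tilde b+c)$ is a rapidly decaying harmonic function, hence lies in $L^2$ and vanishes ($\Delta_g$ has trivial $L^2$-kernel on functions), which yields \eqref{eq:Feynman} and therefore \eqref{eq:Proca}. Finally $c$ is polyhomogeneous, and since its asymptotic expansion is a decaying formal solution of the homogeneous system — which by the previous paragraph carries no logarithmic term when $n$ is odd — $c$ is smooth up to $\overline X$; thus $\overline\nabla:=\nabla^g+(\tilde b+c)$ is a smooth Weyl structure with induced Weyl structure $\nabla$ on $M$. For uniqueness, the difference of two solutions is a $1$-form $\gamma$, smooth up to $\overline X$, with $\Delta_g\gamma=0$, $d_g^*\gamma=0$ and vanishing induced data on $M$; the indicial recursion, started from vanishing data at the small roots, forces $\abs{\gamma}_g=O(r^{\,n-1})$, so $\gamma\in L^2$ (here $n\ge3$ is used), i.e.\ $\gamma$ is an $L^2$-harmonic $1$-form. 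Conversely every $L^2$-harmonic $1$-form is closed and co-closed, hence preserves \eqref{eq:Proca} and \eqref{eq:Feynman}, and by Proposition~\ref{prop:harmonic_extension_of_1-forms} extends smoothly to $\overline X$ with vanishing tangential restriction, so adding it changes neither the regularity nor the induced Weyl structure on $M$.
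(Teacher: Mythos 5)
The decisive step of your argument --- the claim that for odd $n$ the logarithmic obstruction in the indicial recursion ``vanishes identically'' --- is exactly the point where the odd case differs from the even one, and it is the one step you do not prove. The justification you offer (``there is no conformally natural operator of the required odd order'') is not a valid argument. For a general AH smooth conformally compact metric $g$, which is what the hypothesis of the statement allows, the coefficient of the first log term is a local differential operator built out of the full Taylor expansion of $h_x$ in the normal form \eqref{eq:normalization}; it is \emph{not} a conformally natural operator on $(M,\mathcal{C})$, so a classification of conformally invariant operators says nothing about whether it vanishes. The mechanism that actually kills odd-dimensional obstructions in the Fefferman--Graham/GJMS setting is parity: when $h_x$ has an expansion in \emph{even} powers of $x$ up to the critical order, the recursion populates only even powers below that order, and the obstruction at an odd critical exponent vanishes automatically. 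That evenness holds for Poincar\'e metrics but is not part of the hypothesis here. To see the problem concretely, take $n=3$: the tangential indicial roots are $0$ and $n-2=1$, so the would-be log term sits at $x\log x$, and its coefficient is built from $\partial_x h_x|_{x=0}$ acting on $\beta$ --- there is no reason for this to vanish for an arbitrary AH metric. So either you must supply a genuine proof of the vanishing in the stated generality (I do not believe one exists without an evenness or normalization hypothesis on $g$), or you must identify and use the additional structure that makes the recursion close. You flag this as ``the only genuine work beyond the even case'' and then do not do it; as written the proof has a hole at its central point.

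For comparison: the paper does not prove this statement either --- it explicitly leaves Theorem~0.1$'$ to the reader, remarking only that it follows ``almost by the same argument'' as Theorem~\ref{thm:existence_extension}. That argument is a reduction, not a from-scratch analysis: one replaces the reference metric by $\overline{g}=\rho^2 g$ with $\log\rho$ harmonic (Proposition~\ref{prop:harmonic_defining_function}), so that \eqref{eq:Proca} and \eqref{eq:Feynman} become $\Delta_g\overline{b}=0$ for the $1$-form $\overline{b}$ with $\overline{\nabla}=\nabla^{\overline{g}}+\overline{b}$, the gauge condition being recovered from $\Delta_g\overline{b}=0$ by the maximum principle applied to $d_g^*\overline{b}$; everything is then quoted from the odd-$n$ analogue of Proposition~\ref{prop:harmonic_extension_of_1-forms}. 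Your globalization and uniqueness paragraphs essentially unpack that black box (Fredholm correction, triviality of the $L^2$ kernel on functions in place of the maximum principle, decay at the larger indicial root giving $L^2$ membership for $n\ge 3$), and that part is sound in outline. But both routes ultimately rest on the same unproved assertion about the absence of log terms for odd $n$, and neither your heuristic nor the paper's one-line remark discharges it.
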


We do not have similar results for $n=1$, $2$ because Mazzeo's work~\cite{Mazzeo_88}, which gives the
analytic basis to our argument, does not apply in these dimensions.

Now let $n$ be even and $n\ge 4$.
We next focus on the obstruction to the smoothness of $\overline{\nabla}$
to get a quantity that is conformally invariantly assigned to $\nabla$,
as Graham and Zworski~\cite{Graham_Zworski_03} did for functions to characterize
the GJMS operators~\cite{Graham_Jenne_Mason_Sparling_92}.
For our purpose, $g$ should be canonically determined to a sufficient order
only by the conformal class $\mathcal{C}$.
Hence we take the \emph{Poincar\'e metric} of Fefferman--Graham~\cite{Fefferman_Graham_85, Fefferman_Graham_12},
which satisfies
\begin{equation*}
	\Ric_g=-ng+O(r^n)\qquad\text{and}\qquad \tr_g(\Ric_g+ng)=O(r^{n+2})\qquad\text{at $\bdry X$}.
\end{equation*}
(The first condition means that $\abs{\Ric(g)+ng}_g=O(r^n)$.)
If $\mathcal{C}$ is given, then such a $g$ exists, and is unique up to an $O(r^n)$ error
with $O(r^{n+2})$ trace and the action of diffeomorphisms of
$\overline{X}$ that restricts to the identity on $\bdry X$.
Then the aforementioned obstruction is determined only by the pair $(\mathcal{C},\nabla)$.
Furthermore, it turns out that it is naturally interpreted as a tractor on $M$.
Let us set up the notation: $\mathcal{E}[w]$ is the density bundle of conformal weight $w$ over $M$,
$\mathcal{S}$ is the standard conformal tractor bundle, $\mathcal{S}[w]=\mathcal{S}\otimes\mathcal{E}[w]$, and
$\mathcal{S}^*[w]=\mathcal{S}^*\otimes\mathcal{E}[w]$.
For the definition of these bundles, we refer to Bailey--Eastwood--Gover~\cite{Bailey_Eastwood_Gover_94}
or Eastwood's expository article~\cite{Eastwood_96}.
By abuse of notation, the spaces of smooth sections of these bundles are denoted by the same symbols.
Then we have the following.

\begin{thm}
	\label{thm:smoothness_extension}
	Let $g$ be the Poincar\'e metric on $X$, and $\nabla$ a smooth Weyl structure on $(M,\mathcal{C})$.
	Then there exists a density-weighted standard cotractor $\bm{Q}_\nabla\in\mathcal{S}^*[n+1]$ on $M$,
	which is locally determined by $(\mathcal{C},\nabla)$, such that
	any $C^{n-3}$ extension $\overline{\nabla}$ in Theorem \ref{thm:existence_extension}
	is smooth if and only if $\bm{Q}_\nabla$ vanishes.
\end{thm}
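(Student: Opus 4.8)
The plan is to run the standard formal-solution-plus-obstruction analysis for the boundary value problem in Theorem \ref{thm:existence_extension}, keeping track of conformal equivariance, and then package the obstruction as a tractor. First I would work in a fixed conformal representative $h\in\mathcal{C}$ with an associated special boundary defining function $r$, so that the Poincar\'e metric takes the normal form $g=r^{-2}(dr^2+h_r)$ with $h_0=h$; here $h_r$ is a one-parameter family of metrics on $M$ whose Taylor coefficients are locally determined by $h$ up to order $n$ (with the usual $O(r^n)$ ambiguity having $O(r^{n+2})$ trace). In these coordinates the gauge-fixed Proca system \eqref{eq:Proca}, \eqref{eq:Feynman} for $b\in\Omega^1(X)$ becomes, after writing $b=b_0\,dr+b'$ with $b'$ a family of $1$-forms on $M$, a second-order system whose indicial roots I would compute along the lines of Mazzeo's normal-operator analysis \cite{Mazzeo_88}; the point is that one of the indicial roots is resonant with the order $n$ at which we try to solve formally, which is exactly where the obstruction to a smooth solution (equivalently, to a smooth $\overline\nabla=\nabla^g+b$) appears. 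The Dirichlet data $\nabla$ on $M$ fixes the leading behaviour of $b$ — concretely, the restriction $b|_{TM}$ is determined by the difference between $\nabla$ and the Levi-Civita connection of $h$ — and then I would solve the system order by order in $r$.

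The key step is the formal power series solution. Writing $b\sim\sum_{k\ge 0}b^{(k)}r^k$ (modulo the expected $\log$ term), the equations \eqref{eq:Proca} and \eqref{eq:Feynman} give recursions of the form $P_k\,b^{(k)} = (\text{expression in }b^{(0)},\dots,b^{(k-1)}\text{ and the Taylor coefficients of }h_r)$, where $P_k$ is a fixed algebraic operator on the pair (tangential part, normal component) of $b^{(k)}$. For $k<n$ the operator $P_k$ is invertible, so $b^{(k)}$ is uniquely and locally determined by $(\mathcal{C},\nabla)$; at $k=n$ the operator $P_n$ has a one-dimensional cokernel (this is the resonance), and the solvability condition is precisely the vanishing of a certain weighted density-valued quantity $\mathcal{O}_\nabla$, built locally from $(\mathcal{C},\nabla)$, sitting in $\mathcal{E}[?]$ with the weight dictated by the homogeneity of $r^n$ times a $1$-form. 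If $\mathcal{O}_\nabla\ne 0$ one is forced to introduce a $\log$ term $c\,r^n\log r$ with $c\propto\mathcal{O}_\nabla$, and the resulting $\overline\nabla$ is then exactly $C^{n-3}$ but not $C^{n-1}$ (hence not smooth), while if $\mathcal{O}_\nabla=0$ the formal series continues and, by Mazzeo's theory together with the existence part already proved, lifts to an actual smooth solution. Conversely, if some $C^{n-3}$ extension from Theorem \ref{thm:existence_extension} is smooth, then comparing its Taylor expansion with the forced one shows $\mathcal{O}_\nabla=0$, since the ambiguity in the construction (an $L^2$-harmonic $1$-form, smooth up to the boundary with vanishing restriction to $TM$) cannot affect the coefficient of $r^n\log r$. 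Thus smoothness of every/any extension is equivalent to $\mathcal{O}_\nabla=0$.

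It then remains to promote the conformal density $\mathcal{O}_\nabla$ to a tractor. The mechanism is the usual one: under $h\mapsto e^{2\Upsilon}h$ (equivalently $\beta\mapsto\beta-d\Upsilon$, $r\mapsto e^{\Upsilon}r\cdot(1+\dots)$) the quantity $\mathcal{O}_\nabla$ does not transform tensorially as a section of a single density bundle, but rather its transformation law is triangular in $\Upsilon$, $d\Upsilon$ — exactly the pattern that identifies a composite object as the top slot of a standard cotractor in $\mathcal{S}^*[n+1]$. Concretely I would compute the change of the order-$n$ obstruction under the Fefferman--Graham diffeomorphism that intertwines the two normal forms, read off that the full obstruction data assembles into $\bm{Q}_\nabla=(\,\mathcal{O}_\nabla\,;\,\text{sub-leading data}\,;\,\text{bottom slot}\,)$ transforming by the standard cotractor formula, and note that since the whole construction was local in $(\mathcal{C},\nabla)$ the resulting $\bm{Q}_\nabla$ is too. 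The vanishing of $\bm{Q}_\nabla$ is equivalent to the vanishing of its top slot $\mathcal{O}_\nabla$ (the other slots being conformally-covariant consequences), so $\overline\nabla$ is smooth iff $\bm{Q}_\nabla=0$.

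I expect the main obstacle to be the precise indicial-root bookkeeping at order $n$: one must verify that the relevant operator $P_n$ really does degenerate (and in a one-dimensional way), identify which combination of $b^{(0)},\dots,b^{(n-1)}$ and curvature of $h_r$ constitutes $\mathcal{O}_\nabla$, and — most delicately — check that the $O(r^n)$ ambiguity in the Poincar\'e metric (with $O(r^{n+2})$ trace) does not contaminate $\mathcal{O}_\nabla$, so that it depends only on $(\mathcal{C},\nabla)$ and not on the choice of $g$ within its allowed family. The tractor repackaging, by contrast, should be essentially forced once the transformation law of $\mathcal{O}_\nabla$ under change of representative is in hand.
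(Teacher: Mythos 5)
Your overall strategy (formal expansion, resonance at a critical order, obstruction, tractor packaging) is the right genre, but the core of your argument rests on a claim that is false for this problem: that the obstruction is a \emph{single} density $\mathcal{O}_\nabla$ arising from a one-dimensional cokernel at order $n$, with the remaining tractor slots being ``conformally-covariant consequences'' of it. The indicial analysis of $\Delta_g$ on $1$-forms (see \eqref{eq:Laplacian_in_components}) gives \emph{two} distinct resonances: the tangential part has indicial roots $0$ and $n-2$, while the normal part has roots $0$ and $n$. Consequently there are two independent logarithmic coefficients --- a $1$-form $L_1\beta$ at order $x^{n-2}\log x$ and a scalar at order $x^{n-1}\log x$ in the $dx$-component --- and smoothness requires \emph{both} to vanish (this is Lemma \ref{lem:smoothness}). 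These two quantities are exactly the two nonzero slots of $\bm{Q}_\nabla$ in \eqref{eq:Q-tractor}; neither determines the other, and the transformation law \eqref{eq:transform_of_bottom_component} shows that the scalar slot picks up an $\braket{L_1\beta,d\Upsilon}$ error under conformal rescaling, which is precisely why the pair must be assembled into a cotractor rather than a single density. Your concluding equivalence ``smooth iff the top slot vanishes'' therefore does not close the proof.

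Two further points. First, your ansatz $b\sim\sum_{k\ge0}b^{(k)}r^k$ for $\overline{\nabla}=\nabla^g+b$ cannot work as written: since $g$ blows up at the boundary, $b$ necessarily has a $dr/r$ singularity, so the expansion does not start at order zero. The paper sidesteps this by using the compactified reference metric $\rho^2g$, where $\log\rho$ is the harmonic defining function of Proposition \ref{prop:harmonic_defining_function}. Second, that choice is not merely a convenience: it converts the system \eqref{eq:Proca}--\eqref{eq:Feynman} into the single harmonic equation $\Delta_g\overline{b}=0$ already solved in Proposition \ref{prop:harmonic_extension_of_1-forms}, and it is what makes Branson's $Q$-curvature enter, through the log coefficient $s$ of $\log\rho$, leading to the identification of the scalar obstruction as $Q_01+G_1\beta$. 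Your proposal defers exactly this identification (``which combination \dots constitutes $\mathcal{O}_\nabla$''), which is where most of the actual content of the theorem lies; without it one gets only the existence of \emph{some} local obstruction, not the stated tractor $\bm{Q}_\nabla$.
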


Let $h\in\mathcal{C}$ and $\beta\in\Omega^1(M)$ be such that $\nabla=\nabla^h+\beta$.
The choice of $h$ determines a direct sum decomposition
$\mathcal{S}^*\cong\mathcal{E}[-1]\oplus\Omega^1[1]\oplus\mathcal{E}[1]$, where
$\Omega^1[1]=\Omega^1(M)\otimes\mathcal{E}[1]$.
Via this decomposition and the trivialization of the density bundles by $h$, the tractor $\bm{Q}_\nabla$ is given by
\begin{equation}
	\label{eq:Q-tractor}
	\bm{Q}_\nabla\overset{h}{=}
	(-1)^{n/2-1}2^{n-2}(n/2-1)!^2
	\begin{pmatrix}
		Q_01+G_1\beta & L_1\beta & 0
	\end{pmatrix}.
\end{equation}
Here we used the Branson--Gover operators~\cite{Branson_Gover_05}
$L_1\colon\Omega^1(M)\to\Omega^1(M)$, $G_1\colon\Omega^1(M)\to C^\infty(M)$ and
$Q_0\colon C^\infty(M)\to C^\infty(M)$
(adopting the normalization of Aubry--Guillarmou~\cite{Aubry_Guillarmou_11}). In particular,
\begin{equation*}
	Q_01=\frac{(-1)^{n/2-1}}{2^{n-2}(n/2-1)!^2}Q_h,
\end{equation*}
where $Q_h$ is Branson's $Q$-curvature of $h$.
Since it is known that $L_1$ and $G_1$ annihilate closed forms (see \cite{Branson_Gover_05}),
$\bm{Q}_\nabla$ is essentially Branson's $Q$-curvature when $\nabla$ is a Levi-Civita connection.
The authors propose to call $\bm{Q}_\nabla$ the \emph{$Q$-curvature tractor} of the Weyl structure $\nabla$.

For given $\nabla$, we consider the natural pairing of $\bm{Q}_\nabla$ and
another canonical tractor $\bm{W}_\nabla\in\mathcal{S}[-1]$ associated to $\nabla$.
By using any metric $h\in\mathcal{C}$ and $\beta\in\Omega^1(M)$ for which $\nabla=\nabla^h+\beta$,
we define
\begin{equation*}
	\bm{W}_\nabla\overset{h}{=}\begin{pmatrix}
		1 \\ -\beta^\sharp \\ \frac{1}{2}\abs{\beta}^2
	\end{pmatrix}.
\end{equation*}
Then the pairing $Q_\nabla=\braket{\bm{Q}_\nabla,\bm{W}_\nabla}\in\mathcal{E}[n]$ can be integrated. Since
\begin{equation*}
	Q_\nabla/dV_h=Q_h+(-1)^{n/2-1}2^{n-2}(n/2-1)!^2(G_1\beta-\braket{L_1\beta,\beta}),
\end{equation*}
we may use the fact that $G_1\beta$ is the divergence of some 1-form to conclude that, for $M$ compact,
$Q_\nabla$ integrates to the following global invariant of $(M,\mathcal{C},\nabla)$:
\begin{equation}
	\label{eq:integral}
	\int_M Q_hdV_h+(-1)^{n/2}2^{n-2}(n/2-1)!^2\int_M \braket{L_1\beta,\beta}dV_h.
\end{equation}
This can be seen as a functional in the space of Weyl structures on $(M,\mathcal{C})$.
As the first term, the total $Q$-curvature, is an invariant of $\mathcal{C}$,
the formula above makes us curious about the spectrum of $L_1$.
There are explicit formulae for $n=4$ and $6$~\cite{Aubry_Guillarmou_11}*{Section 8}:
\begin{equation*}
	L_1=\frac{1}{2}d^*d\quad\text{($n=4$)},\qquad
	L_1=-\frac{1}{16}d^*\left(\Delta_h-\Ric+\frac{2}{5}\Scal\right)d\quad\text{($n=6$)}.
\end{equation*}
Here $\Ric$ acts as an endomorphism.
In four dimensions, this implies that the second term in \eqref{eq:integral} is nonnegative and vanishes
if and only if $\beta$, or equivalently $\nabla$, is closed.
Hence the integral of $Q_\nabla$ minimizes at closed Weyl structures.
The same is true in six dimensions under some assumption on the Ricci tensor.
In general dimensions, a formula of $L_1$ can be obtained for an Einstein metric $h$
by using the idea in third author's article~\cite{Matsumoto_13}.
If $\Ric_h=2\lambda(n-1)h$ so that the Schouten tensor is $P_h=\lambda h$,
\begin{equation}
	\label{eq:explicit_formula_for_conformally_Einstein}
	L_1=\frac{(-1)^{n/2}}{2^{n-3}(n/2-1)!(n/2-2)!}d^*\left(\prod_{m=1}^{n/2-2}(\Delta_h-2m(m-n+3)\lambda)\right)d.
\end{equation}
One may conclude by this that, if $\mathcal{C}$ contains an Einstein metric with positive scalar curvature,
then the integral of $Q_\nabla$ minimizes exactly at Levi-Civita connections
(note that Bochner's Theorem assures the vanishing of $H^1(M)$).

Our theorems are applications of the previous results on the Dirichlet problems for functions and
differential forms on AH manifolds. The analytic aspect is due to
Mazzeo--Melrose~\cite{Mazzeo_Melrose_87} and Mazzeo~\cite{Mazzeo_88},
while the asymptotic expansions were investigated thoroughly by
Graham--Zworski~\cite{Graham_Zworski_03} and Aubry--Guillarmou~\cite{Aubry_Guillarmou_11}.
A direct connection to Branson's $Q$-curvature was found by Fefferman--Graham~\cite{Fefferman_Graham_02}.
In Section~\ref{sec:functions_and_1-forms}, we recall their results that are necessary here.
We prove our main theorems in Section~\ref{sec:Weyl_connection},
and the proof of \eqref{eq:explicit_formula_for_conformally_Einstein} is given in
Section \ref{sec:conformally_Einstein}.
(For our analysis of $\bm{Q}_\nabla$, formal asymptotic expansions suffice our needs
and the deep results of~\cite{Mazzeo_Melrose_87,Mazzeo_88} are not really necessary.
However we choose to use them for a clearer exposition.)
We shall concentrate on the case where $n$ is even and leave the proof of Theorem 0.1$'$ to the interested reader.

\subsection*{Acknowledgments}

This work started during CL's visit to the University of Tokyo in 2014 and
the preparation of the manuscript was finished during YM's visit to
the \'Ecole normale sup\'erieure in Paris in 2014--15.
They would like to acknowledge the kind hospitality of the both institutions.
KH is partially supported by JSPS KAKENHI grant 60218790.
CL is partially supported by JSPS Postdoctoral Fellowship for North American and European Researchers
(Short-term) PE 13079.
YM is partially supported by JSPS Postdoctoral Fellowship and KAKENHI grant 26-11754.

\section{Preliminaries: Dirichlet problem for functions and 1-forms}
\label{sec:functions_and_1-forms}

We always assume that $n$ is even and $n\ge 4$ in the sequel.
Let $g$ be an AH smooth conformally compact metric on $X$.
It is well known~\cite{Graham_Lee_91}*{Section 5} that a sufficiently small open neighborhood $\mathcal{U}$ of
$M\subset\overline{X}$ can be identified with the product $M\times[0,\varepsilon)$ so that
\begin{equation}
	\label{eq:normalization}
	g=\frac{dx^2+h_x}{x^2},
\end{equation}
where $x$ is the coordinate on the second factor of $M\times[0,\varepsilon)$ and
$h_x$ is a smooth 1-parameter family of Riemannian metrics on $M$.
The metric $h=h_0$ is a representative of the conformal class $\mathcal{C}$.
In fact, for any prescribed $h\in\mathcal{C}$, there is such an identification;
moreover, $h$ determines the identification near $\bdry X$.
We call the expression \eqref{eq:normalization} the \emph{normalization} of $g$,
and $x$ the \emph{normalizing boundary defining function} of $\overline{X}$, with respect to $h$.

We shall summarize fundamental results on the Dirichlet problems for functions and 1-forms.
In the original papers, some of them are stated under (weak or genuine) Einstein conditions,
but they are actually valid in the following general setting.
Asymptotic expansions in the propositions below are given with respect to the identification
$\mathcal{U}\cong M\times[0,\varepsilon)$ associated to some fixed $h$.

\begin{prop}[Mazzeo--Melrose~\cite{Mazzeo_Melrose_87}, Graham--Zworski~\cite{Graham_Zworski_03}]
	\label{prop:harmonic_extension_of_functions}
	For any function $\varphi\in C^\infty(M)$,
	there exists a unique harmonic function $\overline{f}\in C^{n-1}(\overline{X})$ with boundary value $\varphi$.
	It has the following expansion at the boundary:
	\begin{equation*}
		\overline{f}=\varphi+\sum_{k=1}^{n-1}x^k\varphi_k+x^n\log x\cdot L_0\varphi+O(x^n),
		\qquad\varphi_k\in C^\infty(M).
	\end{equation*}
	Here $L_0$ is a linear differential operator locally determined by $g$ and $h$,
	and $\overline{f}$ is smooth if $L_0\varphi$ vanishes.
	If $g$ is the Poincar\'e metric, $L_0$ is the GJMS operator of critical order up to normalization.
\end{prop}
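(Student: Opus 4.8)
The plan is to separate the statement into a formal part---the asymptotic expansion and the obstruction operator $L_0$---and an analytic part---existence, uniqueness and regularity of the genuine harmonic extension---and to treat the identification of $L_0$ for the Poincar\'e metric at the end.

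First I would work in the normalization $g=x^{-2}(dx^2+h_x)$ attached to $h$. A direct computation gives
\begin{equation*}
	\Delta_g u=-(x\partial_x)^2u+n\,x\partial_x u-\tfrac12\,x\,\tr(h_x^{-1}\partial_x h_x)\,(x\partial_x u)+x^2\Delta_{h_x}u,
\end{equation*}
so $\Delta_g(x^s)=s(n-s)x^s+O(x^{s+1})$ and the indicial roots are $s=0$ and $s=n$. Feeding a formal series $\overline f\sim\sum_{k\ge0}x^k\varphi_k$ with $\varphi_0=\varphi$ into $\Delta_g\overline f=0$ and collecting powers of $x$, at each order $x^k$ with $1\le k\le n-1$ the coefficient $\varphi_k$ is forced---by division by the nonzero number $k(n-k)$---to be a universal differential-operator image of $\varphi_0,\dots,\varphi_{k-1}$, hence of $\varphi$. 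At order $x^n$ the indicial factor $n(n-n)$ vanishes while the right-hand side is, in general, a nonzero differential operator applied to $\varphi$; since $\Delta_g(\varphi_n x^n)=O(x^{n+1})$ cannot cancel it but $\Delta_g(x^n\log x)=-n\,x^n+O(x^{n+1}\log x)$ can, one is forced to insert a term $x^n\log x\cdot L_0\varphi$ with $L_0$ a specific differential operator, after which $\varphi_n$ remains a free parameter. If $L_0\varphi=0$, no logarithm appears at order $n$, and then, because $k(n-k)\ne0$ for every $k>n$, the recursion continues to all orders without any further logarithmic terms, producing a genuine formal power series.

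Next I would produce the true solution. Since $g$ has infinite volume there are no $L^2$-harmonic functions, so $0$ lies below the essential spectrum $[\,n^2/4,\infty)$ and is not an eigenvalue; hence by Mazzeo's $0$-calculus \cite{Mazzeo_88} (building on \cite{Mazzeo_Melrose_87}), $\Delta_g$ is invertible from $x^\delta H^{k+2}$ to $x^\delta H^k$ (weighted Sobolev spaces for $g$) whenever $0<\delta<n$. Truncating the formal series above at a large order $N$ gives $u_0\in C^{N-1}(\overline X)$ with $\Delta_g u_0=E=O(x^N)$; solving $\Delta_g v=-E$ with $\delta$ just below $n$, and noting that the leading exponent of $v$ must be the indicial root $n$ and that $v$ carries no logarithm there (else $\Delta_g v$ would contain an uncancelled $x^n$ term), one gets $v=O(x^n)$. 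Then $\overline f=u_0+v$ is harmonic, restricts to $\varphi$ on $M$, and has exactly the asserted expansion, with $\overline f\in C^{n-1}(\overline X)$ since the least regular term present is $x^n\log x$. Uniqueness is the maximum principle: a function in $C^0(\overline X)$ that is $\Delta_g$-harmonic and vanishes on $\bdry X$ is identically $0$, as $\Delta_g$ is a second-order elliptic operator with no zeroth-order term. Finally, if $L_0\varphi=0$ the formal expansion is a pure power series, and boundary elliptic regularity in the $0$-calculus upgrades $\overline f$ to $C^\infty(\overline X)$.

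For the last assertion, when $g$ is the Poincar\'e metric I would invoke Graham--Zworski \cite{Graham_Zworski_03}: the scattering operator $S(s)$ of $\Delta_g u=s(n-s)u$ continues meromorphically and has a simple pole at $s=n$ whose residue is a nonzero universal constant times the GJMS operator of critical order $n$, and the $x^n\log x$-coefficient of the $s=n$ Poisson solution is precisely that residue applied to $\varphi$; hence $L_0$ equals the critical GJMS operator up to a constant. I expect the main obstacle to be the bookkeeping of the logarithmic terms: establishing that a single $x^n\log x$ term controlled by a differential operator $L_0$ is forced at the critical order, that its vanishing propagates to kill every higher obstruction, and---on the analytic side---that the genuine solution built from Mazzeo's mapping properties inherits this formal expansion verbatim, in particular that the correction $v$ introduces no spurious logarithm at order $x^n$. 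The remaining ingredients (the indicial computation, maximum-principle uniqueness, and citing the scattering/GJMS identification) are comparatively routine.
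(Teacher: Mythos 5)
Your proposal is correct and follows exactly the route the paper (and its cited sources, Mazzeo--Melrose and Graham--Zworski) intends: the paper itself does not prove this proposition but only records the key structural fact $\Delta_g=-(x\partial_x)^2+nx\partial_x+xR$ in \eqref{eq:Laplacian_on_functions}, i.e.\ the indicial roots $0$ and $n$, from which your formal-series construction, the forced $x^n\log x$ term, the Mazzeo--Melrose correction argument, maximum-principle uniqueness, and the scattering-theoretic identification of $L_0$ with the critical GJMS operator all proceed as in the references. No gaps.
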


The solvability of the Dirichlet problem and the appearance of the first logarithmic term at the power $x^n$ are
consequences of the fact that the characteristic exponents of the Laplacian on functions are
$0$ and $n$: $\Delta_g$ on functions is expressed as
\begin{equation}
	\label{eq:Laplacian_on_functions}
	\Delta_g=-(x\partial_x)^2+nx\partial_x+xR,
\end{equation}
in which $R$ is a polynomial of vector fields that are tangent to $\bdry X$.

A similar technique was used to obtain
the following ``direct'' characterization of Branson's $Q$-curvature in terms of the Poincar\'e metric.

\begin{prop}[Fefferman--Graham~\cite{Fefferman_Graham_02}]
	\label{prop:harmonic_defining_function}
	For any representative metric $h\in\mathcal{C}$ and the associated normalizing boundary defining function $x$,
	there exists a unique function $\rho$ such that $u=\log\rho-\log x\in C^{n-1}(\overline{X})$,
	$\log\rho$ is harmonic, and $u|_{\bdry X}=0$. It has the following expansion:
	\begin{equation*}
		\log\rho=\log x+\sum_{k=1}^{n-1}x^kr_k+x^n\log x\cdot s+O(x^n),
		\qquad r_k,\ s\in C^\infty(M).
	\end{equation*}
	The function $u$ is smooth if $s$ vanishes. If $g$ is the Poincar\'e metric, then
	\begin{equation*}
		s=\frac{(-1)^{n/2-1}}{2^{n-1}(n/2)!(n/2-1)!}Q_h,
	\end{equation*}
	where $Q_h$ is Branson's $Q$-curvature of $h$.
\end{prop}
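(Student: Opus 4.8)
The final statement is in essence the theorem of Fefferman--Graham~\cite{Fefferman_Graham_02}, and I would organize a proof in three stages: a formal construction of $\rho$ in a collar of $M$, an analytic correction making $\log\rho$ genuinely harmonic on all of $\overline{X}$, and finally the identification of the obstruction $s$ with $Q_h$ in the Poincar\'e case.

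The first stage is an ODE-type recursion in the normalization \eqref{eq:normalization}. Writing $\log\rho=\log x+u$, the harmonicity requirement on $\log\rho$ becomes an equation $\Delta_g u=F$ for a function $F$ that is smooth up to $\bdry X$ and locally determined by $g$ and $h$. Substituting a formal series $u=\sum_{k\ge 1}x^k r_k$ and comparing coefficients of $x^k$ using \eqref{eq:Laplacian_on_functions}, the $x^k$-coefficient of $\Delta_g u$ is $k(n-k)\,r_k$ plus an expression in $r_1,\dots,r_{k-1}$ (and in $g$, $h$). Since $k(n-k)\neq 0$ for $1\le k\le n-1$, the coefficients $r_1,\dots,r_{n-1}$ are successively and uniquely determined; this gives uniqueness of the formal solution through this order and shows each $r_k$ to be a universal differential-polynomial expression in $h$ and the Taylor coefficients of $h_x$. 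At $k=n$ the factor $k(n-k)$ vanishes, producing an obstruction that no smooth $x^n$-term can absorb; since $\Delta_g(x^n\log x)$ has leading term $-nx^n$, it can instead be cancelled by a term $x^n\log x\cdot s$ with a unique $s\in C^\infty(M)$, after which (when $s=0$) the recursion continues with $r_n$ free and no further obstruction, the indicial roots $0$ and $n$ being the only source of resonance.

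For the second stage I would turn the formal series into an honest function. Truncating at a large order $N>n$ gives a function $u_N$ on $\overline{X}$, polyhomogeneous of the claimed shape near $M$, with $\Delta_g u_N$ agreeing with $F$ to order $x^N$; the remaining error $E:=\Delta_g u_N-F=O(x^N)$ is then removed by solving $\Delta_g w=-E$ with $w\to 0$ at $\bdry X$. Here one invokes Mazzeo--Melrose~\cite{Mazzeo_Melrose_87} and Mazzeo~\cite{Mazzeo_88}: $0$ lies below the bottom $n^2/4$ of the essential spectrum of $\Delta_g$ and is not an $L^2$-eigenvalue, so $\Delta_g$ is invertible, and its inverse sends an $O(x^N)$ function to a polyhomogeneous one with leading exponent $>n$, hence $C^{n-1}$ and irrelevant to the expansion through order $x^n$. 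Then $u:=u_N+w$ solves $\Delta_g u=F$, so $\log\rho:=\log x+u$ meets all requirements, with the asserted asymptotics and with smoothness exactly when $s=0$. Uniqueness is immediate from Proposition~\ref{prop:harmonic_extension_of_functions}: if $\rho_1,\rho_2$ both work, then $\log\rho_1-\log\rho_2=u_1-u_2\in C^{n-1}(\overline{X})$ is harmonic and vanishes on $\bdry X$, hence vanishes identically.

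The third and genuinely substantive stage is to compute $s$ when $g$ is the Poincar\'e metric, for which the Taylor coefficients of $h_x$ are the universal curvature expressions of the Fefferman--Graham expansion; then $s$ is a local Riemannian invariant of $h$ of the correct weight, and the claim is that it equals the stated multiple of $Q_h$. I would import this from~\cite{Fefferman_Graham_02}; alternatively one can match $\int_M s\,dV_h$ against the coefficient of the logarithmic term in the renormalized-volume expansion (a known multiple of the total $Q$-curvature) and upgrade that integrated identity to a pointwise one via a conformal-variation argument. This last identification is where the real content lies and is the main obstacle to a self-contained proof; by contrast the analytic input of~\cite{Mazzeo_Melrose_87,Mazzeo_88} is off-the-shelf, and, as the authors note, only the formal recursion of the first stage is actually needed for the subsequent analysis of $\bm{Q}_\nabla$.
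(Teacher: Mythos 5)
The paper does not actually prove Proposition~\ref{prop:harmonic_defining_function}: it is imported wholesale from Fefferman--Graham~\cite{Fefferman_Graham_02}, so the only fair comparison is with their argument, and your three-stage outline (formal recursion governed by the indicial roots $0$ and $n$, an exact correction via \cite{Mazzeo_Melrose_87, Mazzeo_88}, and the identification of $s$ with $Q_h$, which you likewise import) has the right architecture and matches theirs.

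There is, however, one concrete step that fails as written. You reduce harmonicity of $\log\rho$ to $\Delta_g u = F$ with $F=-\Delta_g\log x$ ``smooth up to $\bdry X$'' and then start the recursion at $k=1$, tacitly assuming that the $x^0$-coefficient of $F$ vanishes. It does not: by \eqref{eq:Laplacian_on_functions}, $\Delta_g\log x = n + O(x)$ (already on hyperbolic space one has $\Delta\log x=n$ exactly), so $F$ has constant term $-n$. Since $k=0$ is itself an indicial root, the factor $k(n-k)$ vanishes there and no $u\in C^{n-1}(\overline{X})$ with $u|_{\bdry X}=0$ can absorb this term --- the obstruction occurs at order $x^0$, before your recursion begins, and indeed no $g$-harmonic function asymptotic to $\log x$ exists. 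The equation actually solved in \cite{Fefferman_Graham_02}, and the one for which the constant $\frac{(-1)^{n/2-1}}{2^{n-1}(n/2)!(n/2-1)!}$ is computed, is $\Delta_g\log\rho = n$; equivalently, $d\log\rho$ is a harmonic $1$-form, which is presumably the intended reading of ``$\log\rho$ is harmonic.'' With $F = n-\Delta_g\log x = O(x)$ your stages one and two go through exactly as you describe (with the small caveat that the correction $w$ obtained from the resolvent has leading exponent $n$ rather than $>n$, which is still invisible modulo $O(x^n)$), and uniqueness via Proposition~\ref{prop:harmonic_extension_of_functions} is unaffected since the difference of two solutions satisfies the homogeneous equation. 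So: verify the $x^0$-coefficient rather than assuming it, and state explicitly which equation you are solving --- this is the one point where your sketch, taken literally, breaks down.
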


The corresponding problem for differential forms is studied in~\cite{Mazzeo_88, Aubry_Guillarmou_11}.
Though differential forms of general degrees are considered in these works, we only use the 1-form case.
For a later need, we state the result for general inhomogeneous equations, which also follows from their approach.

\begin{prop}[Mazzeo~\cite{Mazzeo_88}, Aubry--Guillarmou~\cite{Aubry_Guillarmou_11}]
	\label{prop:harmonic_extension_of_1-forms}
	Let $\overline{a}\in\Omega^1(\overline{X})$ be a smooth 1-form on $\overline{X}$
	such that $\overline{a}|_{TM}=0$.
	Then for any 1-form $\beta\in\Omega^1(M)$,
	there exists a solution $\overline{b}\in C^{n-3}(\overline{X},T^*\overline{X})$ to the equation
	$\Delta_g\overline{b}=\overline{a}$ satisfying $\overline{b}|_{TM}=\beta$,
	which is unique modulo $L^2$-harmonic 1-forms. It allows the expansion
	\begin{equation*}
		\overline{b}=\beta+\sum_{k=1}^{n-3}x^k\beta_k+x^{n-2}\log x\cdot \beta^{(1)}
		+\left(\sum_{k=0}^{n-2}x^k\varphi_k+x^{n-1}\log x\cdot\varphi^{(1)}\right)dx+O^+(x^{n-2}),
	\end{equation*}
	where $\beta_k$, $\beta^{(1)}\in\Omega^1(M)$, $\varphi_k$, $\varphi^{(1)}\in C^\infty(M)$
	and the remainder $O^+(x^{n-2})$ is an $O(x^{n-2})$ term that becomes $O(x^{n-1})$
	when contracted with $\partial_x$.
	The solution $\overline{b}$ is smooth if $\beta^{(1)}$ and $\varphi^{(1)}$ both vanish.

	If $\overline{a}=0$, then there are linear differential operators $L_1$ and $G_1$
	locally determined by $g$ and $h$ for which $\beta^{(1)}=L_1\beta$, $\varphi^{(1)}=G_1\beta$.
	Moreover, if $g$ is the Poincar\'e metric,
	then $L_1$ and $G_1$ are the Branson--Gover operators up to normalization.
\end{prop}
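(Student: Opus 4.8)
The plan is to reduce everything to the indicial analysis of the Hodge--de Rham Laplacian $\Delta_g$ acting on 1-forms, just as Proposition \ref{prop:harmonic_extension_of_functions} rests on \eqref{eq:Laplacian_on_functions}. Using the normalization \eqref{eq:normalization} attached to the chosen $h$, I would decompose a 1-form as $\overline{b}=b_M+\varphi\,dx$ with $b_M$ a family of 1-forms on $M$ and $\varphi$ a function, and compute $\Delta_g$ in this frame. The outcome is a $2\times 2$ system in $(b_M,\varphi)$ whose leading part at $x=0$ is of Bessel type: the tangential component $b_M$ has indicial roots $0$ and $n-2$, while the normal component $\varphi$ has indicial roots $1$ and $n-1$ (these shifts by one relative to the function case are exactly the degree-1 analogue of the standard weights for forms on AH spaces, cf.\ the critical-degree phenomena in \cite{Mazzeo_88,Aubry_Guillarmou_11}). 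This explains both the shape of the claimed expansion and the two logarithmic terms, at $x^{n-2}\log x$ in the tangential part and $x^{n-1}\log x$ in the $dx$-part.

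Next I would run the formal power-series construction. Prescribing $b_M|_{x=0}=\beta$ as Dirichlet data, the recursion determines $\beta_1,\dots,\beta_{n-3}$ and $\varphi_0,\dots,\varphi_{n-2}$ uniquely from $\Delta_g\overline{b}=\overline{a}$ together with $\overline{a}|_{TM}=0$ (which guarantees the inhomogeneity does not obstruct the low-order steps and does not introduce spurious logarithms below critical order); at the critical orders the indicial operator degenerates, forcing the $\log$-coefficients $\beta^{(1)}\in\Omega^1(M)$ and $\varphi^{(1)}\in C^\infty(M)$, and I would read off that in the homogeneous case $\overline{a}=0$ these coefficients are the values at $\beta$ of differential operators $L_1$, $G_1$ built universally from $g$ and $h$ (tracking coefficients through the recursion shows locality and the differential-operator form). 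The refined remainder $O^{+}(x^{n-2})$—$O(x^{n-2})$ in general but $O(x^{n-1})$ after contraction with $\partial_x$—reflects precisely the split indicial roots of the two components and is what one gets after subtracting the explicit terms.

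To upgrade the formal solution to a genuine one with the stated boundary regularity and the uniqueness-mod-$L^2$-harmonic-forms clause, I would invoke the Mazzeo--Melrose/Mazzeo edge-calculus parametrix for $\Delta_g$ on 1-forms over AH manifolds: the formal solution above, truncated at high order, is an approximate solution, and the parametrix corrects it to an exact one in $x^{n-2}\big(\text{polyhomogeneous}\big)$, whence the $C^{n-3}$ regularity up to $\overline{X}$ and the claim that $\overline{b}$ is smooth exactly when $\beta^{(1)}=\varphi^{(1)}=0$ (no logarithm, hence polyhomogeneous expansion in integer powers). Uniqueness up to $L^2$-harmonic 1-forms is the standard Fredholm statement for $\Delta_g$ at the bottom of the essential spectrum in this weight range. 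Finally, the identification of $L_1$, $G_1$ with the Branson--Gover operators when $g$ is the Poincar\'e metric follows by matching: for the Poincar\'e metric the whole construction is conformally natural in $(\mathcal{C})$ and $h$-equivariant in the way Branson--Gover operators are (with the Aubry--Guillarmou normalization), and two conformally natural operators of the same order agreeing on all $(\mathcal{C},h)$ must coincide; one can pin down the normalizing constant on the round sphere or in the conformally flat model, as in \cite{Aubry_Guillarmou_11}. The main obstacle I anticipate is bookkeeping at the critical orders: the two components have different critical weights, they are coupled through the first-order terms of $\Delta_g$, and one must verify that the $dx$-component's logarithm at $x^{n-1}\log x$ is genuinely independent (not forced to vanish by the tangential equation), which is where the precise structure of $\Delta_g$ in the frame \eqref{eq:normalization}—rather than a soft indicial-root count—is really needed.
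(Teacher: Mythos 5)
The paper does not prove this proposition: it is quoted as a known result of Mazzeo and Aubry--Guillarmou, with the remark that the inhomogeneous version ``also follows from their approach.'' Your plan is a faithful reconstruction of that approach (indicial analysis of $\Delta_g$ on 1-forms in the frame \eqref{eq:normalization}, formal power-series recursion with logs forced at the critical exponents, edge-calculus parametrix and the Fredholm statement for uniqueness modulo $L^2$-harmonic forms), and it is consistent with the component formulas the paper itself records later as \eqref{eq:divergence_in_components} and \eqref{eq:Laplacian_in_components}. One slip worth correcting: writing $\eta=\eta^{(t)}+\eta^{(n)}(dx/x)$ as in \eqref{eq:Laplacian_in_components}, the normal part has indicial roots $0$ and $n$, i.e.\ $-1$ and $n-1$ for the coefficient of $dx$, not $1$ and $n-1$; this does not move the logarithm away from $x^{n-1}\log x\cdot dx$, but it does change the bookkeeping of free data (the root $-1$ is excluded by continuity of $\overline{b}$ up to $\bdry X$, so the $dx$-part carries no low-order Dirichlet data and is driven entirely by the coupling term $2x^2d_{h_x}^*\eta^{(t)}$ and the source). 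Your identification of $L_1,G_1$ with the Branson--Gover operators via ``naturality plus uniqueness'' is softer than the direct matching carried out in Aubry--Guillarmou, but as a plan it points at the right references.
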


\section{Proof of main theorems}
\label{sec:Weyl_connection}

Let $\nabla$ be a Weyl structure on $(M,\mathcal{C})$.
As explained in Introduction, the construction of the extension $\overline{\nabla}$ in
Theorem \ref{thm:existence_extension} boils down to a Dirichlet problem on 1-forms.
However, in order to apply Proposition~\ref{prop:harmonic_extension_of_1-forms} for this purpose,
$g$ is not appropriate as a reference metric for $\overline{\nabla}$.
Indeed, since $g$ diverges at $\bdry X$, so does the 1-form $b$ satisfying $\overline{\nabla}=\nabla^g+b$.

A good choice of reference metric is $\overline{g}=\rho^2g$, where $\rho$ is the function given in
Proposition \ref{prop:harmonic_defining_function} for some $h\in\mathcal{C}$.
Since $\rho$ is a (possibly non-smooth) defining function,
$\overline{g}$ is a metric on $\overline{X}$ that represents $\overline{\mathcal{C}}$.
If we take the 1-form $\overline{b}$ for which $\overline{\nabla}=\nabla^{\overline{g}}+\overline{b}$,
then since $\overline{b}=b-d\log\rho$ and $\Delta_g\log\rho=0$,
\eqref{eq:Proca} and \eqref{eq:Feynman} are equivalent to $d_g^*d\overline{b}=0$ and $d_g^*\overline{b}=0$.
Obviously, for this system to be satisfied, it is necessary that
\begin{equation}
	\Delta_g\overline{b}=0.
\end{equation}
The converse holds actually. In fact, if $\Delta_g\overline{b}=0$ then $\Delta_g(d_g^*\overline{b})=0$ follows.
By the conformal change law of the divergence (see Besse~\cite{Besse_87}*{1.159 Theorem}),
$d_g^*\overline{b}=\rho^2d_{\overline{g}}^*\overline{b}+(n-1)\rho\braket{d\rho,\overline{b}}_{\overline{g}}$
is continuous up to the boundary and vanishes on $\bdry X$,
so the maximum principle implies that $d_g^*\overline{b}=0$.
Hence we also have $d_g^*d\overline{b}=0$.

\begin{proof}[Proof of Theorem~\ref{thm:existence_extension}]
	Take an arbitrary pair $(h,\beta)$ so that $\nabla=\nabla^h+\beta$.
	We define $\overline{g}=\rho^2g$, where $\rho$ is the function
	in Proposition \ref{prop:harmonic_defining_function} associated to $h$.
	Then by Proposition \ref{prop:harmonic_extension_of_1-forms},
	there is a 1-form $\overline{b}\in C^{n-3}(\overline{X};T^*\overline{X})$ such that
	$\Delta_g\overline{b}=0$ and $\overline{b}|_{TM}=\beta$.
	We set
	\begin{equation*}
		\overline{\nabla}=\nabla^{\overline{g}}+\overline{b}.
	\end{equation*}
	Then \eqref{eq:Proca} and \eqref{eq:Feynman} follow because $\Delta_g\overline{b}=0$ holds.
	Moreover, for any vector fields $\xi$, $\eta\in\mathfrak{X}(\overline{X})$ that are tangent to $\bdry X$,
	the tangential component of $\overline{\nabla}_\xi\eta$ is
	$\nabla^h_\xi\eta+\beta(\eta)\xi+\beta(\xi)\eta-h(\xi,\eta)\beta^\sharp$, which is $\nabla_\xi\eta$.
	In this construction, there is an ambiguity in $\overline{b}$ that lies in the $L^2$-kernel of
	$\Delta_g$ on 1-forms.
	Since $\overline{b}|_{TM}=\beta$ is necessary in order that $\overline{\nabla}$ induces $\nabla$,
	there is no other ambiguities.
\end{proof}

It is interesting to see directly that another choice $(h',\beta')$ would lead to the
same Weyl structure $\overline{\nabla}$ (modulo, of course, $L^2$-harmonic 1-forms).
If $\nabla=\nabla^h+\beta=\nabla^{h'}+\beta'$,
then we can write $h'=e^{2\Upsilon}h$ and $\beta'=\beta-d\Upsilon$ by some $\Upsilon\in C^\infty(M)$.
Let $\overline{\Upsilon}$ be the harmonic extension of $\Upsilon$, which uniquely exists by
Proposition~\ref{prop:harmonic_extension_of_functions}.
Then the function $\rho'$ in Proposition \ref{prop:harmonic_defining_function} associated to $h'$ is
$\rho'=e^{\overline{\Upsilon}}\rho$, and hence
$\smash{\overline{g}}'=\smash{\rho'}^2g=e^{2\overline{\Upsilon}}\overline{g}$.
On the other hand, a solution to $\Delta_g\smash{\overline{b}}'=0$ and
$\smash{\overline{b}}'|_{TM}=\beta'$ is given by $\smash{\overline{b}}'=\overline{b}-d\overline{\Upsilon}$.
Therefore, $\nabla^{\smash{\overline{g}}'}+\smash{\overline{b}}'$ and
$\nabla^{\overline{g}}+\overline{b}$ are the same.

Next we discuss the smoothness issue.

\begin{lem}
	\label{lem:smoothness}
	Let $h\in\mathcal{C}$ and $\beta\in\Omega^1(M)$ be such that $\nabla=\nabla^h+\beta$.
	Then, the Weyl structure $\overline{\nabla}$ in Theorem \ref{thm:existence_extension} is smooth
	if and only if
	\begin{equation}
		\label{eq:vanishing_first_log_terms}
		L_1\beta=0\qquad\text{and}\qquad ns+G_1\beta=0,
	\end{equation}
	where $s\in C^\infty(M)$ is given in Proposition \ref{prop:harmonic_defining_function}
	and $L_1$, $G_1$ are as in Proposition \ref{prop:harmonic_extension_of_1-forms}.
\end{lem}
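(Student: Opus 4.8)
The plan is to reduce the smoothness of $\overline{\nabla}$ near $\bdry X$ (it is automatically smooth in the interior) to that of a single $1$-form that falls within the scope of Proposition~\ref{prop:harmonic_extension_of_1-forms}. First I would replace the reference metric $\overline{g}=\rho^2g$, which is only $C^{n-1}$, by the honestly smooth metric $\widehat{g}=x^2g\in\overline{\mathcal{C}}$, where $x$ is the normalizing boundary defining function for $h$. Since $\rho=e^ux$ with $u=\log\rho-\log x$, we have $\overline{g}=e^{2u}\widehat{g}$, so the Weyl change law gives $\overline{\nabla}=\nabla^{\widehat{g}}+\widehat{b}$ with $\widehat{b}=\overline{b}+du$. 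Because $\widehat{g}$ is smooth and the difference tensor $\overline{\nabla}-\nabla^{\widehat{g}}$ is an algebraic expression in $\widehat{b}$, $\widehat{g}$ and $\widehat{g}^{-1}$, the connection $\overline{\nabla}$ is smooth if and only if $\widehat{b}$ is smooth up to $\bdry X$.

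Next I would check that $\widehat{b}$ solves an inhomogeneous Dirichlet problem of the type covered by Proposition~\ref{prop:harmonic_extension_of_1-forms}. As $\log\rho$ is $g$-harmonic and $\Delta_g$ commutes with $d$ on functions, $\Delta_g\widehat{b}=d\,\Delta_g u=-d\,\Delta_g\log x$; and from \eqref{eq:Laplacian_on_functions} one computes $\Delta_g\log x=n+O(x)$, a function smooth up to $\bdry X$. Hence $\widehat{a}:=\Delta_g\widehat{b}=-d\,\Delta_g\log x$ is a smooth $1$-form with $\widehat{a}|_{TM}=0$, while $\widehat{b}|_{TM}=\beta$ because $u=O(x)$. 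Proposition~\ref{prop:harmonic_extension_of_1-forms} then yields the boundary expansion of $\widehat{b}$ and tells us that $\widehat{b}$ — and therefore $\overline{\nabla}$ — is smooth if and only if the coefficient of $x^{n-2}\log x$ in the tangential part of $\widehat{b}$ and the coefficient of $x^{n-1}\log x$ in the $dx$-part both vanish.

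It remains to compute these two coefficients from $\widehat{b}=\overline{b}+du$, which is legitimate since passing to a fixed coefficient in such an expansion is linear in the data. For $\overline{b}$, the homogeneous case of Proposition~\ref{prop:harmonic_extension_of_1-forms} contributes $L_1\beta$ and $G_1\beta$ respectively. For $du$, Proposition~\ref{prop:harmonic_defining_function} gives $u=\sum_{k=1}^{n-1}x^kr_k+sx^n\log x+O(x^n)$; so the tangential part of $du$ is $\sum_{k=1}^{n-1}x^k\,dr_k+x^n\log x\,ds+(\text{higher order})$, whose first log term sits at order $x^n\log x$ and hence contributes nothing at order $x^{n-2}\log x$, whereas $d(sx^n\log x)=ns\,x^{n-1}\log x\,dx+s\,x^{n-1}\,dx+x^n\log x\,ds$ shows that the $x^{n-1}\log x$ coefficient of the $dx$-part of $du$ is $ns$. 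Thus the two relevant coefficients of $\widehat{b}$ are $L_1\beta$ and $G_1\beta+ns$, and the lemma follows.

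The point I expect to require the most care is the second step: one must verify that $\widehat{b}$ is genuinely a solution of an inhomogeneous problem as in Proposition~\ref{prop:harmonic_extension_of_1-forms} (i.e.\ that $\Delta_g\log x$ is smooth up to the boundary and tangentially constant to leading order), because it is precisely the ``smooth if the first log coefficients vanish'' clause of that proposition — applied to $\widehat{b}$ rather than to $\overline{b}$ alone — that rules out any further obstruction hidden in higher log terms. For instance the $x^n\log x\,ds$ term of $du$ must be cancelled inside $\widehat{b}$, and this is guaranteed only once $\widehat{b}$ is known to solve an honest Dirichlet problem; one cannot see it from the expansion of $\overline{b}$ by itself, since the vanishing of $G_1\beta$ is not assumed.
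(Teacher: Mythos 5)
Your proposal is correct and follows essentially the same route as the paper: pass to the smooth reference metric $x^2g$, observe that the resulting $1$-form $\overline{b}+du$ solves the inhomogeneous equation $\Delta_g(\overline{b}+du)=-d\Delta_g\log x$ with smooth right-hand side vanishing on $TM$, and read off the two critical log coefficients from the expansions in Propositions \ref{prop:harmonic_defining_function} and \ref{prop:harmonic_extension_of_1-forms}. The point you flag as delicate (that sufficiency requires viewing $\overline{b}+du$ as a solution of an honest Dirichlet problem, not just adding expansions) is exactly the step the paper also isolates.
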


\begin{proof}
	We take the normalization of the metric $g$ with respect to $h$,
	and take the 1-form $\tilde{b}$ so that $\overline{\nabla}=\nabla^{x^2g}+\tilde{b}$.
	Then, since $x^2g$ is smooth up to $\bdry X$,
	$\overline{\nabla}$ is smooth if and only if $\tilde{b}$ is smooth.
	Using $\rho$ and $\overline{b}$ constructed in the proof of Theorem \ref{thm:existence_extension},
	$\tilde{b}$ is computed as follows:
	\begin{equation*}
		\begin{split}
			\tilde{b}&=(d\log\rho+\overline{b})-d\log x\\
			&=d\left(\sum_{k=1}^{n-1}x^kr_k+x^n\log x\cdot s\right)\\
			&\phantom{\;=\;}
			+\beta+\sum_{k=1}^{n-3}x^k\beta_k+x^{n-2}\log x\cdot L_1\beta
			+\left(\sum_{k=0}^{n-2}x^k\varphi_k+x^{n-1}\log x\cdot G_1\beta\right)dx+O^+(x^{n-2})\\
			&=\beta+\sum_{k=1}^{n-3}x^k(\beta_k+dr_k)+x^{n-2}\log x\cdot L_1\beta\\
			&\phantom{\;=\;}
			+\left(\sum_{k=0}^{n-2}x^k(\varphi_k+(k+1)r_{k+1})+x^{n-1}\log x\cdot(ns+G_1\beta)\right)dx
			+O^+(x^{n-2}).
		\end{split}
	\end{equation*}
	Therefore, \eqref{eq:vanishing_first_log_terms} is equivalent to that the first logarithmic terms of
	$\tilde{b}$ being zero; thus \eqref{eq:vanishing_first_log_terms} is necessary for the smoothness.
	Furthermore, since $\Delta_g\tilde{b}=-\Delta_gd\log x=-d\Delta_g\log x$ and
	$\Delta_g\log x\in xC^\infty(\overline{X})$ by an explicit computation,
	it follows that $(\Delta_g\tilde{b})|_{TM}=0$.
	Hence by Proposition \ref{prop:harmonic_extension_of_1-forms}, \eqref{eq:vanishing_first_log_terms} is
	also sufficient.
\end{proof}

Let us specialize to the case where $g$ is the Poincar\'e metric.
Then, since $ns=Q_01$, \eqref{eq:vanishing_first_log_terms} is equivalent to $\bm{Q}_\nabla=0$
if $\bm{Q}_\nabla$ is defined by \eqref{eq:Q-tractor}.
What remains is to check the well-definedness of $\bm{Q}_\nabla$.
It is by definition equivalent to that the
conformal transformation law of $Q_01+G_1\beta$ is as follows: if $\Hat{h}=e^{2\Upsilon}h$, then
\begin{equation}
	\label{eq:transform_of_bottom_component}
	\Hat{Q}_01+\Hat{G}_1\Hat{\beta}=e^{-n\Upsilon}(Q_01+G_1\beta-\braket{L_1\beta,d\Upsilon}).
\end{equation}
To show this, we recall from \cite{Aubry_Guillarmou_11}*{Corollary 4.14} that
the transformation laws of $Q_01$ and $G_1$ are $\Hat{Q}_01=e^{-n\Upsilon}(Q_01+nL_0\Upsilon)$ and
$\Hat{G}_1=e^{-n\Upsilon}(G_1-\iota_{\grad\Upsilon}L_1)$ (the first one is of course the well-known
transformation law of the $Q$-curvature).
We also note that $L_1$ vanishes on closed forms
and $L_0=(1/n)G_1d$ (see~\cite[Proposition 4.12]{Aubry_Guillarmou_11}). So we obtain
\begin{equation*}
	\begin{split}
		\Hat{G}_1\Hat{\beta}
		&=e^{-n\Upsilon}(G_1(\beta-d\Upsilon)-\braket{L_1(\beta-d\Upsilon),d\Upsilon})\\
		&=e^{-n\Upsilon}(G_1\beta-G_1d\Upsilon-\braket{L_1\beta,d\Upsilon})
		=e^{-n\Upsilon}(G_1\beta-nL_0\Upsilon-\braket{L_1\beta,d\Upsilon}).
	\end{split}
\end{equation*}
Hence \eqref{eq:transform_of_bottom_component} follows, and the proof of Theorem~\ref{thm:smoothness_extension}
is completed.

\section{Explicit computation on conformally Einstein manifolds}
\label{sec:conformally_Einstein}

In this section, we prove the explicit formula \eqref{eq:explicit_formula_for_conformally_Einstein}
of the operator $L_1$ on a conformally Einstein manifold $(M,\mathcal{C})$.
The proof here follows the symmetric 2-tensor case carried out in~\cite{Matsumoto_13}.
While the argument in~\cite{Matsumoto_13} was given in terms of the Fefferman--Graham ambient metric,
the same idea can also be implemented by the Poincar\'e metric, which we adopt in this exposition.

Suppose first that $\mathcal{C}$ does not necessarily carry Einstein representatives.
Without losing generality, we may assume that $M$ is the boundary of
an $(n+1)$-dimensional smooth compact manifold-with-boundary $\overline{X}$.
Identify an open neighborhood $\mathcal{U}$ of $M\subset\overline{X}$ with $M\times[0,\varepsilon)$.
We fix a representative $h\in\mathcal{C}$ once and for all, and let
\begin{equation*}
	g=\frac{dx^2+h_x}{x^2}
\end{equation*}
be a Poincar\'e metric for which $h_0=h$ and $h_x$ has an expansion in even powers of $x$
(see~\cite{Fefferman_Graham_02}).

Recall that, in Proposition \ref{prop:harmonic_extension_of_1-forms},
we called a 1-form $\eta\in\Omega^1(X)$ is $O^+(x^m)$ when $\eta$ is $O(x^m)$
and $\eta(\partial_x)=O(x^{m+1})$.
We now introduce some subspaces of such 1-forms.
For each even integer $w\ge -n+2$, let $\mathcal{A}[w]\subset\Omega^1(X)$ be the space of
1-forms that are expressed, near $\bdry X$, as
\begin{equation*}
	\eta=x^{-w}\beta_x+x^{-w+2}\varphi_x\frac{dx}{x},
\end{equation*}
where $\beta_x$ and $\varphi_x$ are smooth families of 1-forms and functions on $M$ in $x\in[0,\varepsilon)$
with expansions in even powers of $x$.
Moreover, we say that $\eta\in\mathcal{A}[w]$ is in $\mathcal{A}_\mathrm{df}[w]$ when $d_g^*\eta=O(x^n)$.
Note that $\mathcal{A}_\mathrm{df}[-n+2]=\mathcal{A}[-n+2]$ (use \eqref{eq:divergence_in_components} below).
For all $w\le -n$, we set $\mathcal{A}_\mathrm{df}[w]$ ($=\mathcal{A}[w]$) to be
\begin{equation*}
	\Set{\eta=x^{n-2}\beta_x+x^n\varphi_x\frac{dx}{x}|
		\text{$\beta_x$ and $\varphi_x$ are families as mentioned above such that $\beta_0=0$}}.
\end{equation*}
We need this somewhat irregular definition for technical reasons
which can be seen in the proof of Lemma \ref{lem:EFH}.
If $\eta\in\mathcal{A}[w]$, we call $\beta=\beta_0=(x^w\eta)|_{TM}\in\Omega^1(M)$ the \emph{restriction} of $\eta$,
and $\eta$ an \emph{extension} of $\beta$.
It is clear that the restriction of any element in $\mathcal{A}[w]$, $w\le -n$, is zero.

Consider the following three operators between these spaces:
\begin{alignat*}{2}
	E&\colon \mathcal{A}_\mathrm{df}[w]\longrightarrow\mathcal{A}_\mathrm{df}[w+2],&\qquad
	\eta&\longmapsto -\tfrac{1}{4}\eta,\\
	F&\colon \mathcal{A}_\mathrm{df}[w]\longrightarrow\mathcal{A}_\mathrm{df}[w-2],&\qquad
	\eta&\longmapsto (\Delta_g+w(w+n-2))\eta,\\
	H&\colon \mathcal{A}_\mathrm{df}[w]\longrightarrow\mathcal{A}_\mathrm{df}[w],&\qquad
	\eta&\longmapsto (w+n/2)\eta.
\end{alignat*}
We make the following observations on these operators.

\begin{lem}
	\label{lem:EFH}
	(1) The operators $E$, $F$, and $H$ above are well-defined and form an $\mathfrak{sl}_2$-triple.

	(2) Any $\beta\in\Omega^1(M)$ can be extended to some $\eta\in\mathcal{A}_\mathrm{df}[0]$.
\end{lem}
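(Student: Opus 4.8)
The statement has two parts. For part (1), the assertion that $E$, $F$, $H$ form an $\mathfrak{sl}_2$-triple amounts to the bracket relations $[H,E]=2E$, $[H,F]=-2F$, $[E,F]=H$, which I would verify directly from the definitions: since $E$ is multiplication by the scalar $-1/4$ and $H$ is multiplication by $w+n/2$ on $\mathcal{A}_\mathrm{df}[w]$, the first two are immediate once one tracks the weight shifts, and $[E,F]=H$ reduces to the elementary identity $-\tfrac14\bigl((w+2)(w+n)-w(w+n-2)\bigr)\cdot(-4)=w+n/2$ after composing $E$ and $F$ in both orders on a weight-$w$ element. The genuine content of part (1) is \emph{well-definedness}: one must check that $E$, $F$, $H$ actually map $\mathcal{A}_\mathrm{df}[w]$ into $\mathcal{A}_\mathrm{df}$ of the appropriate weight. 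For $E$ and $H$ this is trivial (scalar multiples), so the crux is $F$: given $\eta\in\mathcal{A}_\mathrm{df}[w]$, I must show $(\Delta_g+w(w+n-2))\eta\in\mathcal{A}_\mathrm{df}[w-2]$.

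To do this I would compute $\Delta_g$ acting on a 1-form of the model shape $\eta=x^{-w}\beta_x+x^{-w+2}\varphi_x\,\tfrac{dx}{x}$ in the normalization $g=(dx^2+h_x)/x^2$. The key structural fact, paralleling \eqref{eq:Laplacian_on_functions} for functions, is that $\Delta_g$ on 1-forms has the form $-(x\partial_x)^2 + (\text{linear in }x\partial_x) + x\cdot(\text{tangential})$ after suitable rescaling, with indicial roots that I need to read off. Acting on $x^{-w}\beta_x$, the leading behavior in $x$ picks up a factor $(w)(w+n-2)$ from the indicial part — which is exactly cancelled by the constant $w(w+n-2)$ added in the definition of $F$ — leaving a result that is again of the form $x^{-(w-2)}\tilde\beta_x + x^{-(w-2)+2}\tilde\varphi_x\tfrac{dx}{x}$ with even-power expansions. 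I then need to verify the divergence-free-to-order-$n$ condition $d_g^*(F\eta)=O(x^n)$; since $\Delta_g$ commutes with $d_g^*$, we have $d_g^*(\Delta_g\eta)=\Delta_g(d_g^*\eta)$, and $d_g^*\eta=O(x^n)$ together with the indicial structure \eqref{eq:Laplacian_on_functions} of $\Delta_g$ on functions forces $\Delta_g(d_g^*\eta)=O(x^n)$ as well (here I would use the explicit component formula for $d_g^*$ referenced as \eqref{eq:divergence_in_components}). The boundary cases $w=-n+2$ (where $\mathcal{A}_\mathrm{df}=\mathcal{A}$) and $w\le -n$ (the irregular definition with $\beta_0=0$) need to be handled separately: for $w\le -n$ one checks that $F$ preserves the condition $\beta_0=0$, which is where — as the remark in the text flags — the ``somewhat irregular definition'' is forced, since otherwise $\beta_0$ could fail to vanish after applying $F$. \textbf{I expect this well-definedness check for $F$, especially across the exceptional weights, to be the main obstacle}; the $\mathfrak{sl}_2$ relations themselves are a short computation once the maps are known to land in the right spaces.

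For part (2), given $\beta\in\Omega^1(M)$ I want $\eta\in\mathcal{A}_\mathrm{df}[0]$ with $\eta|_{TM}=\beta$, i.e.\ $\eta=\beta_x+x^2\varphi_x\tfrac{dx}{x}$ with even expansions and $d_g^*\eta=O(x^n)$. The natural candidate is the solution $\overline{b}$ to $\Delta_g\overline{b}=0$, $\overline{b}|_{TM}=\beta$ from Proposition~\ref{prop:harmonic_extension_of_1-forms}: its expansion there is precisely of the required form up to the first log terms at order $x^{n-2}$ and $x^{n-1}$, which is ``good enough'' since $\mathcal{A}[0]$ only demands the even-power structure to the order that matters and the log terms sit at or beyond the truncation. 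More cleanly, I would instead build $\eta$ by solving the transport equations order by order: plug the ansatz into $\Delta_g\eta=0$ (or just $d_g^*\eta=O(x^n)$ if one does not need harmonicity) and solve recursively for the coefficients $\beta_{(k)}$, $\varphi_{(k)}$; because the indicial root $0$ of $\Delta_g$ on functions/1-forms does not obstruct the tangential part at the relevant orders below $n$, the recursion runs without obstruction up to $O(x^n)$, which is exactly what $\mathcal{A}_\mathrm{df}[0]$ requires. The Feynman-type condition $d_g^*\eta=O(x^n)$ can be imposed along the way by choosing the $\varphi_{(k)}$ appropriately, using the divergence formula to see that each order gives a solvable equation for the next $\varphi$-coefficient. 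Since this is essentially a repackaging of Proposition~\ref{prop:harmonic_extension_of_1-forms}, part (2) should follow quickly once part (1) is in place.
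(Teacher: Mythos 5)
Your overall strategy (compute $\Delta_g$ and $d_g^*$ in components in the normal form $g=(dx^2+h_x)/x^2$, cancel the indicial factor, check preservation of the divergence condition by commuting $\Delta_g$ with $d_g^*$) is the same as the paper's, and your treatment of the bracket relations, of $d_g^*(F\eta)=O(x^n)$, and of part (2) is essentially what the paper does. But there is a genuine gap at the crux of part (1): you only analyze the \emph{tangential} component of $F\eta$. The normal (i.e.\ $dx/x$) component of $\Delta_g\eta$ contains the off-diagonal term $2x^2d_{h_x}^*\eta^{(t)}$ (see \eqref{eq:Laplacian_in_components}), which for $\eta=x^{-w}\beta_x+x^{-w+2}\varphi_x\,\tfrac{dx}{x}$ contributes at order $x^{-w+2}$, whereas membership in $\mathcal{A}[w-2]$ requires the normal part to be $O(x^{-w+4})$. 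So it is simply \emph{false} that $\Delta_g+w(w+n-2)$ maps the model shape of weight $w$ to the model shape of weight $w-2$; the divergence condition is not merely ``preserved along the way'' but is what makes $F$ land in the right space. Concretely, for $w\ge -n+4$ the condition $d_g^*\eta=O(x^n)$ together with \eqref{eq:divergence_in_components} gives $x^2d_{h_x}^*\eta^{(t)}+(w+n-2)\eta^{(n)}=O(x^{-w+4})$, and one checks that the $x^{-w+2}$ coefficient of the normal part of $F\eta$ is exactly $2$ times this combination (the indicial operator $-(x\partial_x)^2+nx\partial_x$ applied to $x^{-w+2}\varphi_0$ plus the shift $w(w+n-2)$ produces $2(w+n-2)x^{-w+2}\varphi_0$), so it cancels. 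Your sketch asserts the conclusion without this step, which is precisely the point the paper singles out as ``the most nontrivial''.

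Two smaller remarks. First, your displayed identity for $[E,F]=H$ is off: the correct computation is $[E,F]\eta=-\tfrac14\bigl(w(w+n-2)-(w+2)(w+n)\bigr)\eta=(w+n/2)\eta$; with your extraneous factor of $(-4)$ the left-hand side evaluates to $4w+2n$, not $w+n/2$. Second, on the exceptional weights: the delicate transition is $F\colon\mathcal{A}_\mathrm{df}[-n+2]\to\mathcal{A}_\mathrm{df}[-n]$, where the source does \emph{not} require $\beta_0=0$ but the target does; this holds because the tangential indicial polynomial $\sigma(n-2-\sigma)$ vanishes at $\sigma=n-2$, so the $x^{n-2}$ tangential coefficient of $F\eta$ drops out — saying that ``$F$ preserves the condition $\beta_0=0$'' for $w\le-n$ does not capture this case. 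Part (2) as you describe it (order-by-order solution for the $\varphi$-coefficients using \eqref{eq:divergence_in_components}, with the indicial factor $n-2j\neq 0$ for $2j<n$ guaranteeing solvability) is fine and matches the paper, which leaves the details to the reader.
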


\begin{proof}
	The most nontrivial point about (1) is that $F$ maps $\mathcal{A}_\mathrm{df}[w]$ into
	$\mathcal{A}_\mathrm{df}[w-2]$.
	This can be checked using formulae of Aubry--Guillarmou~\cite{Aubry_Guillarmou_11}*{Equations (2.2), (2.3)}.
	Namely, if we decompose $\eta\in\mathcal{A}[w]$ into the tangential and normal parts as
	$\eta=\eta^{(t)}+\eta^{(n)}(dx/x)$, then
	\begin{equation}
		\label{eq:divergence_in_components}
		d_g^*\eta=
		\begin{pmatrix}
			x^2 d_{h_x}^* & -x\partial_x+n \\
			0 & 0
		\end{pmatrix}
		\begin{pmatrix}
			\eta^{(t)} \\ \eta^{(n)}
		\end{pmatrix}+O(x^{-w+4}),
	\end{equation}
	where the term indicated by $O(x^{-w+4})$ is expanded in even powers of $x$, and
	\begin{equation}
		\label{eq:Laplacian_in_components}
		\Delta_g\eta=
		\begin{pmatrix}
			-(x\partial_x)^2+(n-2)x\partial_x & 0 \\
			2x^2d_{h_x}^* & -(x\partial_x)^2+nx\partial_x
		\end{pmatrix}
		\begin{pmatrix}
			\eta^{(t)} \\ \eta^{(n)}
		\end{pmatrix}+\mathcal{A}[w-2].
	\end{equation}
	Here $\mathcal{A}[w-2]$ of course denotes some 1-form that belongs to this space.
	Let $\eta\in\mathcal{A}_\mathrm{df}[w]$.
	Then it is immediate from \eqref{eq:Laplacian_in_components} that
	$F\eta\in\mathcal{A}_\mathrm{df}[w-2]$ for $w\le -n+2$.
	For $w\ge -n+4$, observe first that the tangential part of $F\eta$ is $O(x^{-w+2})$.
	Since $d_g^*\eta=O(x^n)$, \eqref{eq:divergence_in_components} implies that
	$x^2d_{h_x}^*\eta^{(t)}+(w-2+n)\eta^{(n)}=O(x^{-w+4})$.
	Then a little computation shows that the normal part of $F\eta$ is $O(x^{-w+4})$.
	Hence $F\eta\in\mathcal{A}[w-2]$ also for $w\ge -n+4$.
	The fact that $d_g^*F\eta=O(x^n)$ is clear from \eqref{eq:Laplacian_on_functions} and
	$d_g^*F\eta=(\Delta_g+w(w+n-2))d_g^*\eta$.

	The assertion (2) follows easily from \eqref{eq:divergence_in_components}. Details are left to the reader.
\end{proof}

For our purpose, it is also important to note that an extension of $\beta$ in (2) can be constructed from
the harmonic extension $\overline{b}$ given in Proposition \ref{prop:harmonic_extension_of_1-forms}.
Using the notation there, we take
\begin{equation*}
	\eta=\beta+\sum_{k=1}^{n-3}x^k\beta_k+\left(\sum_{k=1}^{n-1}x^k\varphi_{k-1}\right)\frac{dx}{x}.
\end{equation*}
Then one can check that $\beta_k=0$ and $\varphi_{k-1}=0$ for $k$ odd, i.e., $\eta\in\mathcal{A}[0]$ actually.
Moreover,
\begin{equation}
	\label{eq:cutoff_term_of_approximate_harmonic_extension}
	\eta-\overline{b}=-x^{n-2}\log x\cdot L_1\beta
	-x^n\log x\cdot (G_1\beta)\frac{dx}{x}+O^+(x^{n-2})
\end{equation}
and $\overline{b}$ admits a polyhomogeneous expansion (see~\cite{Aubry_Guillarmou_11}).
Since $\overline{b}$ satisfies $d_g^*\overline{b}=0$ and it is known that $L_1\beta\in\im d_h^*$, we obtain
from \eqref{eq:cutoff_term_of_approximate_harmonic_extension} and
\eqref{eq:divergence_in_components} that $\eta\in\mathcal{A}_\mathrm{df}[0]$.
We will also need the fact that
\begin{equation}
	\label{eq:error_of_approximate_harmonic_extension}
	\Delta_g\eta=(n-2)x^{n-2}L_1\beta+nx^n(G_1\beta)\frac{dx}{x}+O^+(x^n),
\end{equation}
which follows from \eqref{eq:cutoff_term_of_approximate_harmonic_extension},
\eqref{eq:Laplacian_in_components}, and the fact that $G_1\beta\in\im d_h^*$.

\begin{rem}
	The three operators are also understood by the ambient metric.
	Recall from~\cite{Fefferman_Graham_02}*{Chapter 4} that the ambient metric is given as
	$\tilde{g}=s^2g-ds^2$ in the $(x,\xi,s)$-coordinates,
	which are related with the standard $(\rho,\xi,t)$-coordinates\footnote{It is even more standard to use $x$ for
	the coordinates on $M$, but we use $\xi$ instead as $x$ is already reserved.} on the ambient space
	$\tilde{\mathcal{G}}\cong\mathbb{R}\times M\times(0,\infty)$ by
	\begin{equation*}
		x=\sqrt{-2\rho},\qquad s=\sqrt{-2\rho}\,t
	\end{equation*}
	in the subdomain $\set{\rho<0}$.
	The Poincar\'e manifold $(X,g)$ can be seen as the hypersurface $\set{s=1}$ of $\tilde{\mathcal{G}}$.
	Let $\eta\in\mathcal{A}_\mathrm{df}[w]$, and for simplicity, assume that $w\ge -n+2$ and $d_g^*\eta=0$.
	Assign to it the 1-form $\tilde{\eta}=s^w\eta$ on $\set{\rho<0}\subset\tilde{\mathcal{G}}$.
	Then actually $\tilde{\eta}$ can be extended smoothly across $\rho=0$, and the restriction of $\eta$
	to $M$ corresponds to the pullback of $\tilde{\eta}$ to $\set{\rho=0, t=1}$.
	Now let $T=s\partial_s$. Then $E$, $F$, and $H$ correspond to
	\begin{equation*}
		\tilde{\eta}\longmapsto -\tfrac{1}{4}s^2\tilde{\eta},\qquad
		\tilde{\eta}\longmapsto \tilde{\Delta}\tilde{\eta},\quad\text{and}\quad
		\tilde{\eta}\longmapsto (\tilde{\nabla}_T+\tfrac{n}{2}+1)\tilde{\eta}.
	\end{equation*}
	For example, noting that $\tilde{\eta}(T)=0$,
	$\iota_T(d\tilde{\eta})=\mathcal{L}_T\tilde{\eta}=w\tilde{\eta}$,
	and the fact that $\tilde{g}=e^{2v}(g-dv^2)$ if we put $s=e^v$,
	by the conformal change law of the Hodge Laplacian we conclude that
	\begin{equation*}
		\Delta_{\tilde{g}}\tilde{\eta}
		=e^{-2v}(\Delta_{g-dv^{2}}\tilde{\eta}+(n-2)\iota_T(d\tilde{\eta}))
		=s^{w-2}(\Delta_g+w(w+n-2))\eta.
	\end{equation*}
	For general $\eta\in\mathcal{A}_\mathrm{df}[w]$, we need to introduce more careful assignment
	of ambient 1-forms. We omit it here. The case of $w\le -n$ is not important.
\end{rem}

We shall detect $L_1\beta$ in \eqref{eq:error_of_approximate_harmonic_extension} using the
commutation relations of $E$, $F$, and $H$
as in Graham--Jenne--Mason--Sparling~\cite{Graham_Jenne_Mason_Sparling_92}.
Note first that \eqref{eq:error_of_approximate_harmonic_extension} implies $F\eta=E^{n/2-2}\xi$
with some $\xi\in\mathcal{A}[-n+2]=\mathcal{A}_\mathrm{df}[-n+2]$ that restricts to
$(-4)^{n/2-2}(n-2)L_1\beta$. Then we can deduce that
\begin{equation*}
	\begin{split}
		F^{n/2-1}\eta=F^{n/2-2}E^{n/2-2}\xi
		&=(-1)^{n/2}(n/2-2)!H(H+1)\cdots(H+n/2-3)\xi+E\mathcal{A}_\mathrm{df}[-n]\\
		&=(n/2-2)!^2\xi+E\mathcal{A}_\mathrm{df}[-n].
	\end{split}
\end{equation*}
Let $\eta'\in\mathcal{A}_\mathrm{df}[0]$ be another extension of $\beta$.
Then since $\eta-\eta'\in E\mathcal{A}_\mathrm{df}[-2]$, it follows that
$F^{n/2-1}(\eta-\eta')\in E\mathcal{A}_\mathrm{df}[-n]$.
In particular, we can conclude that
\begin{equation}
	\label{eq:GJMS_construction}
	\begin{split}
		(\text{the restriction of $F^{n/2-1}\eta'$})
		&=(-4)^{n/2-2}(n-2)(n/2-2)!^2L_1\beta\\
		&=(-1)^{n/2}2^{n-3}(n/2-1)!(n/2-2)!L_1\beta
	\end{split}
\end{equation}
for \emph{any} extension $\eta'\in\mathcal{A}_\mathrm{df}[0]$ of $\beta$.

Now suppose there is an Einstein representative $h$ satisfying $\Ric(h)=2(n-1)\lambda h$
in the conformal class $\mathcal{C}$. In this case, one can take
$g=x^{-2}(dx^2+h_x)$, $h_x=(1-\frac{1}{2}\lambda x^2)^2h$ as the Poincar\'e metric.
Since $L_1$ annihilates the closed forms, by the de Rham--Hodge--Kodaira decomposition,
we may assume that $d_h^*\beta=0$ ($\beta\in\im d_h^*$ can even be assumed, but we do not need it here).
Because $h_x$ is conformal to $h$, we also have $d_{h_x}^*\beta=0$.
This implies that the pullback of $\beta$ by the projection
$M\times[0,\varepsilon)\longrightarrow M$ is a divergence-free extension of $\beta$.

We compute the Laplacian on 1-forms of the form $\psi(x)\alpha$, where $\alpha\in\Omega^1(M)$ is divergence-free.
By \eqref{eq:Laplacian_in_components}, $\Delta_g(\psi(x)\alpha)$ is again in this form and
\begin{equation*}
	\Delta_g(\psi(x)\alpha)
	=\left(-(x\partial_x)^2+(n-2)\frac{1-\frac{1}{2}\lambda x^2}{1+\frac{1}{2}\lambda x^2}x\partial_x\right)\psi(x)
	\alpha
	+\frac{x^2}{(1-\frac{1}{2}\lambda x^2)^2}\psi(x)\Delta_h\alpha.
\end{equation*}
If we put $y=x(1-\frac{1}{2}\lambda x^2)^{-1}$, then
\begin{equation*}
	\Delta_g(\psi(x)\alpha)
	=\left(-(y\partial_y)^2+(n-2)y\partial_y
	-2\lambda y^2(y\partial_y)^2+2(n-3)\lambda y^2\cdot y\partial_y\right)\psi\alpha
	+y^2\psi\Delta_h\alpha.
\end{equation*}
Hence, if we take $\psi(x)=y^w$, then $F(y^w\alpha)=y^{-w+2}(\Delta_h-2\lambda w(w-n+3))\alpha$.
By applying this repeatedly, we obtain
\begin{equation*}
	F^{n/2-1}\beta=y^{n-2}\left(\prod_{w=0}^{n/2-2}(\Delta_h-2\lambda w(w-n+3))\right)\beta,
\end{equation*}
which combined with \eqref{eq:GJMS_construction} gives the formula of $L_1\beta$ for divergence-free
1-forms $\beta$.
Reformulating it for general 1-forms, we get \eqref{eq:explicit_formula_for_conformally_Einstein}.

\bibliography{myrefs}

\end{document}